\newcommand{\E}{\ensuremath{\mathbb E}}
\newcommand{\R}{\ensuremath{\mathbb R}}
\newcommand{\Z}{\ensuremath{\mathbb Z}}
\newcommand{\N}{\ensuremath{\mathbb N}}
\newcommand{\prob}[1]{\ensuremath{{\mathbb P}\left(#1\right)}}
\newcommand{\expct}[1]{\ensuremath{{\mathbb E}#1}}
\newcommand{\size}[1]{\ensuremath{\left|#1\right|}}
\newcommand{\argmin}{\operatorname{argmin}}
\newcommand{\RE}{\textnormal{\textsf{RE}}}
\def \a {\alpha}
\def \e {\varepsilon}
\def \d {\delta}
\def \D {\Delta}
\def \r {\rho}
\def \S {\Sigma}
\def \t {\tau}
\newcommand{\ve}{\varepsilon}
\newcommand{\silent}[1]{}
\newcommand{\ip}[1]{\langle{\,#1\,}\rangle}
\newcommand{\Ball}{{B}}
\newcommand{\M}{{\mathcal M}}
\newcommand{\Net}{{\mathcal N}}
\newcommand{\sign}{\text{\rm sgn}}
\newcommand{\inv}[1]{\frac{1}{#1}}
\newcommand{\abs}[1]{\left\lvert#1\right\rvert}
\newcommand{\twonorm}[1]{\left\lVert#1\right\rVert_2}
\newcommand{\shtwonorm}[1]{\lVert#1\rVert_2}
\newcommand{\norm}[1]{\left\lVert#1\right\rVert}
\def \etc {,\ldots,}
\def \absconv {{\rm abs.conv}}
\def \id {{\it id}}
\newcommand{\pr}[2]{\langle {#1} , {#2} \rangle}
\newcommand{\bens}{\begin{eqnarray*}}
\newcommand{\eens}{\end{eqnarray*}}
\newcommand{\ben}{\begin{eqnarray}}
\newcommand{\een}{\end{eqnarray}}
\newcommand{\bnum}{\begin{enumerate}}
\newcommand{\enum}{\end{enumerate}}
\newcommand{\bit}{\begin{itemize}}
\newcommand{\eit}{\end{itemize}}
\newcommand{\beq}{\begin{equation}}
\newcommand{\eeq}{\end{equation}}
\def\P{{\mathbb P}}
\newcommand{\Cone}{{\rm Cone}}
\def\supp{\mathop{\text{\rm supp}\kern.2ex}}
\def\conv{\mathop{\text{\rm conv}\kern.2ex}}
\def\span{\mathop{\text{\rm span}\kern.2ex}}
\def\absconv{\mathop{\text{\rm absconv}\kern.2ex}}
\def\argmin{\mathop{\text{arg\,min}\kern.2ex}}
\let\hat\widehat
\def\W{\Cone}
\numberwithin{equation}{section}
\theoremstyle{plain}
\newtheorem{theorem}{Theorem}[section]
\newtheorem{lemma}[theorem]{Lemma}
\newtheorem{definition}[theorem]{Definition}
\newtheorem{remark}[theorem]{Remark}
\newtheorem{corollary}[theorem]{Corollary}
\def\math{${}^\diamond$}
\def\stat{${}^\star$}
\newenvironment{proofof}[1]{\hspace*{20pt}{\it Proof}{ of #1}.\hskip10pt}{\qed\vskip5pt}
\newenvironment{proofof2}{}{\qed\vskip5pt}
\begin{document}
\title{Reconstruction from anisotropic random measurements
\footnote{
{\bf Keywords.}
$\ell_1$ minimization,
Sparsity, Restricted Eigenvalue conditions,
Subgaussian random matrices,
Design matrices with uniformly bounded entries.
}}

\author{Mark Rudelson\math \footnote{
Research was supported in part by  NSF grant DMS-0907023.}
 \; and\; Shuheng Zhou\stat\;\footnote{emails: rudelson@umich.edu, shuhengz@umich.edu} \\
{\math}Department of Mathematics,\\
{\stat}Department of Statistics,\\
University of Michigan, Ann Arbor, MI 48109-1107}

\maketitle

\begin{abstract}
Random matrices are widely used in sparse recovery problems,
and the relevant properties of matrices with i.i.d. entries are
well understood. The current paper discusses the recently introduced
Restricted Eigenvalue (RE) condition, which is among the most general
assumptions on the matrix, guaranteeing recovery. We prove a reduction
principle showing that the RE condition can be guaranteed by
checking the restricted isometry on a certain family of low-dimensional
subspaces. This principle allows us to establish the RE condition
for several broad classes of random matrices with dependent entries,
including random matrices with subgaussian rows and non-trivial covariance
structure, as well as matrices with independent rows, and uniformly bounded entries.
\end{abstract}

\section{Introduction}
\label{sec:intro}
In a typical high dimensional setting, the number of variables $p$ is much
larger than the number of observations $n$. This challenging setting appears
in statistics and signal processing, for example, in regression,
covariance selection on Gaussian graphical models,
signal reconstruction, and sparse approximation.
Consider a simple setting, where we try to recover a vector $\beta \in \R^p$
in the following linear model:
\beq
\label{eq::linear-model}
Y = X \beta + \epsilon.
\eeq
Here $X$ is an $n \times p$ design matrix, $Y$ is a vector
of noisy observations, and $\epsilon$ is the noise term.
Even in the noiseless case, recovering $\beta$ (or its support)
from $(X, Y)$ seems impossible when $n \ll p$, given that we have more
variables than observations.

A line of recent research shows that when $\beta$ is sparse,
that is, when it has a relatively small number of
nonzero coefficients,
 it is possible to recover $\beta$ from an underdetermined system of equations.
In order to ensure reconstruction, the design matrix $X$ needs to
behave sufficiently nicely in a sense that it satisfies certain incoherence conditions.
One notion of the incoherence which has been formulated in
the sparse reconstruction literature~\citep{CT05,CT06,CT07} bears the name of
Uniform Uncertainty Principle (UUP).
It states that for all $s$-sparse sets $T$, the matrix $X$ restricted to the columns from $T$
acts as an almost isometry.
Let $X_T$, where $T \subset \{1, \ldots, p\}$ be the $n \times |T|$
submatrix obtained by extracting columns of $X$ indexed by $T$.
For each integer $s  =1, 2, \ldots$ such that $s < p$,
the $s$-restricted isometry constant $\theta_s$ of
$X$ is the smallest quantity such that
\ben
\label{eq::RIP}
(1 - \theta_s) \twonorm{c}^2 \leq \twonorm{X_T c}^2/n
\leq (1 + \theta_s) \twonorm{c}^2,
\een
for all $T \subset \{1,\ldots, p\}$ with $|T| \leq s$ and coefficients
sequences $(c_j)_{j \in T}$.
Throughout this paper, we refer to a vector $\beta \in \R^{p}$ with at most
$s$ non-zero entries, where $s \leq p$, as a {\bf $s$-sparse} vector.

To understand the formulation of the UUP, consider the simplest noiseless case as
mentioned earlier, where we assume $\epsilon = 0$ in~\eqref{eq::linear-model}.
Given a set of values $(\ip{X^i, \beta})_{i=1}^n$,
where $X^1, X^2, \ldots, X^n$ are independent random vectors in $\R^p$,
the basis pursuit program{~\citep{Chen:Dono:Saun:1998} finds $\hat\beta$
which minimizes the $\ell_1$-norm of $\beta'$ among all $\beta'$
satisfying $X \beta' = X \beta$, where $X$ is a $n \times p$ matrix
with rows $X^1, X^2, \ldots, X^n$.
This can be cast as a linear program and thus is computationally efficient.
Under variants of such conditions, the exact recovery or approximate
reconstruction of a
sparse $\beta$ using the basis pursuit program has been shown in a series of
powerful results~\citep{Donoho:cs,Donoho:04,CRT06,CT05,CT06,Donoho06,RV06,RV08,CT07}.
We refer to these papers for further
references on earlier results for sparse recovery.

In other words, under the UUP, the design matrix $X$ is taken as a $n \times p$
measurement ensemble through which one aims to recover both
the unknown non-zero positions and the strength of a $s$-sparse
signal $\beta$ in $\R^p$ efficiently (thus the name for compressed sensing).
Naturally, we wish $n$ to be as small as possible for given values of $p$ and $s$.
It is well known that for random matrices, UUP holds for $s = O(n/\log(p/n))$ with
i.i.d. Gaussian random entries,  Bernoulli, and in general  subgaussian
entries~\citep{CT05,RV05,CT06,Donoho06,BDDW08,MPT08}.
Recently, it has been shown~\citep{ALPT09} that UUP holds for
$s = O(n/\log^2(p/n))$ when $X$ is a random matrix composed of
columns that are independent isotropic vectors with log-concave densities.
For a random Fourier ensemble, or randomly sampled rows of orthonormal
matrices, it is shown that~\citep{RV06,RV08} the UUP holds for
$s = O(n/\log^c p)$ for $c=4$, which improves upon the earlier result of~\cite{CT06}
where $c =6$.
To be able to prove UUP for random measurements or design matrix,
the isotropicity condition (cf. Definition~\ref{def:psi2-vector}) has been assumed
in all literature cited above.
This assumption is not always reasonable in statistics and machine learning,
where we often come across high dimensional data with correlated entries.

The work of~\cite{BRT09} formulated the restricted eigenvalue (RE) condition and
showed that it is among the weakest and hence the most general conditions in
literature imposed on the Gram matrix in order to guarantee nice statistical properties
for the Lasso estimator~\citep{Tib96} as well as the Dantzig selector~\citep{CT07}.
In particular, it is shown to be a relaxation of the UUP under suitable choices
of parameters involved in each condition; see~\cite{BRT09}.
We now state one version of the
{\em Restricted Eigenvalue condition} as formulated in ~\citep{BRT09}.
For some integer $0< s_0 <p$ and a positive number $k_0$,
$\RE(s_0, k_0, X)$ for matrix $X$ requires that the following holds:
\beq
\label{eq::admissible}
\forall \upsilon \not = 0, \; \;
\min_{\stackrel{J \subset \{1, \ldots, p\},}{|J| \leq s_0}}
\min_{\norm{\upsilon_{J^c}}_1 \leq k_0 \norm{\upsilon_{J}}_1}
\; \;  \frac{\norm{X \upsilon}_2}{\norm{\upsilon_{J}}_2} > 0,
\eeq
where $\upsilon_{J}$ represents the subvector of $\upsilon \in \R^p$
confined to a subset $J$ of $\{1, \ldots, p\}$.
In the context of compressed sensing, RE condition can also be taken as a way to
guarantee recovery for anisotropic measurements.
We refer to~\cite{GB09} for other conditions which are closely related to the RE condition.

Consider now the linear regression model in~\eqref{eq::linear-model}.
For a chosen penalization parameter $\lambda_n \geq 0$, regularized
estimation with the $\ell_1$-norm penalty, also known as the
Lasso \citep{Tib96} refers to the following convex optimization problem
\begin{eqnarray}
\label{eq::origin} \; \;
\hat \beta = \arg\min_{\beta} \frac{1}{2n}\|Y-X\beta\|_2^2 +
\lambda_n \|\beta\|_1,
\end{eqnarray}
where the scaling factor $1/(2n)$ is chosen for convenience.
Under i.i.d Gaussian noise and the RE condition,
bounds on $\ell_2$ prediction loss and on $\ell_q$,
 $1 \leq q \leq 2$, loss for estimating the parameter $\beta$ in \eqref{eq::linear-model} for
both the Lasso and the Dantzig selector have all been derived in~\cite{BRT09}.
In particular, $\ell_2$ loss of
$\Theta(\lambda  \sigma \sqrt{s})$ were obtained for the Lasso
under $\RE(s, 3, X)$ and the Dantzig selector under $\RE(s, 1, X)$
respectively in~\cite{BRT09}, where it is shown that $\RE(s, 1, X)$
condition is weaker than the UUP used in~\cite{CT07}.

RE condition with parameters $s_0$ and $k_0$
for random measurements / design matrix has been proved for a
random Gaussian vector~\cite{RWY09,RWY10} with a sample bound of
order $n = O(s_0 \log p)$, when condition~\eqref{eq::admissible} holds for
the square root of the population covariance matrix $\Sigma$.
As we show below, the bound $n = O(s_0 \log p)$ can be improved to the optimal one
$n = O(s_0 \log (p/s_0))$ when $\RE(s_0, k_0, \Sigma^{1/2})$ is replaced
with $\RE(s_0, (1+\ve)k_0, \Sigma^{1/2})$ for any $\ve > 0$.
The papers ~\cite{RWY09,RWY10} have motivated the investigation for a non-iid
subgaussian random design by~\cite{Zhou09c}, as well as the present work.
The proof of~\cite{RWY10} relies on a deep result from the theory of Gaussian
random processes -- Gordon's Minimax Lemma~\cite{Gor85}.
However, this result relies on the properties of the normal random variables,
and is not available beyond the Gaussian setting. To 
establish the RE condition for more general classes of random matrices
we had to introduce a new approach based on geometric functional analysis.
We defer the comparison of the present paper with~\cite{Zhou09c} to
Section~\ref{sec:random-RE}.
Both~\cite{ZGB09} and~\cite{GB09} obtained weaker results which are
based on bounding the maximum entry-wise difference between sample
and the population covariance matrices.
We refer to~\cite{RWY10} for a more elaborate comparison.

\subsection{Notation and definitions}

Let $e_1, \ldots, e_p$ be the canonical basis of $\R^p$.
For a set $J \subset \{1, \ldots, p\}$, denote
$E_J = \span\{e_j: j \in J\}$.
For a matrix $A$, we use $\twonorm{A}$ to denote its operator norm.
For a set $V \subset \R^p$,
we let $\conv V$ denote the convex hull of $V$. For a finite set
$Y$, the cardinality is denoted by $|Y|$. Let $\Ball_2^p$ and
$S^{p-1}$ be the unit Euclidean ball and the unit sphere respectively.
For a vector $u \in \R^p$, let $u_{T_0}$ be the subvector of $u$
confined to the locations of its $s_0$ largest coefficients in absolute values.
In this paper, $C, c$, etc, denote various absolute constants which may
change line by line.
Occasionally, we use $u_T \in \R^{|T|}$, where
$T \subseteq \{1, \ldots, p\}$, to also represent its $0$-extended
version $u' \in \R^p$ such that $u'_{T^c} =0$ and
 $u'_{T} =u_T$.

We define $\W(s_0, k_0)$, where $0 < s_0 < p$ and $k_0$ is a positive number,
as the set of vectors in $\R^p$
which satisfy the following cone constraint:
\ben
\label{eq::cone-init}
\W(s_0, k_0) = \left\{x \in \R^p \;|\; \exists I \in \{1, \ldots, p\}, \size{I} = s_0
\; \mbox{ s.t. } \; \norm{x_{I^c}}_1 \leq k_0 \norm{x_{I}}_1 \right\}.
\een
Let $\beta$ be a $s$-sparse vector and $\hat{\beta}$ be the solution
from either the Lasso or the Dantzig selector.
One of the common properties of the Lasso and the Dantzig
selector is: for an appropriately chosen $\lambda_n$ and
under i.i.d. Gaussian noise, the condition
\ben
\label{eq::cone}
\upsilon := \hat{\beta} - \beta  \in \W(s, k_0)
\een
holds with high probability.
Here $k_0=1$ for the Dantzig selector, and $k_0 = 3$ for the Lasso;
see~\cite{BRT09} and~\cite{CT07} for example.
The combination of the cone property~\eqref{eq::cone} and the RE condition
leads to various nice convergence results as stated earlier.

We now define some parameters related to the RE and sparse eigenvalue conditions
that are relevant.
\begin{definition}
\label{def:memory}
Let $1 \leq s_0 \leq p$, and let $k_0$ be a positive number.
We say that a $q \times p$ matrix $A$ satisfies $\RE(s_0, k_0, A)$
 condition with parameter $K(s_0, k_0, A)$ if for any $\upsilon
 \not=0$,
\beq
\label{eq::admissible-random}
\inv{K(s_0, k_0, A)} := \min_{\stackrel{J \subseteq \{1, \ldots,
    p\},}{|J| \leq s_0}}
\min_{\norm{\upsilon_{J^c}}_1 \leq k_0 \norm{\upsilon_{J}}_1}
\; \;  \frac{\norm{A \upsilon}_2}{\norm{\upsilon_{J}}_2} > 0.
\eeq
\end{definition}
It is clear that when $s_0$ and $k_0$ become smaller,
this condition is easier to satisfy.
\begin{definition}
\label{def::sparse-eigen}
For $m \leq  p$, we define the largest and smallest
$m$-sparse eigenvalue of a $q \times p$ matrix $A$ to be
\ben
\label{eq::eigen-Sigma}
\rho_{\max}(m, A) & := &
\max_{t \not= 0; m-\text{sparse}} \; \;\shtwonorm{A t}^2/\twonorm{t}^2, \\
\label{eq::eigen-Sigma-min}
\rho_{\min}(m, A) & := &
\min_{t \not= 0; m-\text{sparse}} \; \;\shtwonorm{A t}^2/\twonorm{t}^2.
\een
\end{definition}
\subsection{Main results}
\label{sec:random-RE}
The main purpose of this paper is to show that the RE condition holds with high
probability for systems of random measurements/random design
matrices of a general nature. To establish such result with high
probability, one has to assume that it holds in average. So, our
problem boils down to showing that, under some assumptions on random variables,
the RE condition on the covariance matrix implies a similar condition on a
random design matrix with high probability when $n$ is sufficiently large
(cf. Theorems~\ref{thm:subgaussian-T-intro} and
Theorem~\ref{thm::RE-bounded-entries-intro}).
This generalizes the results on UUP mentioned above,
where the covariance matrix is assumed to be identity.

Denote by $A$ a fixed $q \times p$ matrix. We consider the
design matrix $X$ which can be represented as
\ben
\label{eq::rand-des}
X= \Psi A,
\een
where the rows of the matrix $\Psi$ are isotropic random vectors.
An example of such a random matrix $X$ consists of independent rows,
each being a random vector in $\R^p$
that follows a multivariate normal distribution $N(0, \Sigma)$,
when we take $A = \S^{1/2}$ in~\eqref{eq::rand-des}.
Our first main result is related to this setup. We consider a matrix
represented as $\tilde{X}=\tilde{\Psi} A$, where the matrix $A$ satisfies the RE
condition. The result is purely geometric, so we consider
a {\em deterministic} matrix $\tilde{\Psi}$.

We prove a general reduction principle showing that if
the matrix $\tilde{\Psi}$ acts as almost isometry on the images of
the sparse vectors under $A$, then the product $\tilde{\Psi} A$
satisfies the RE condition with a smaller parameter $k_0$. More
precisely, we prove Theorem~\ref{thm::isometry-intro}.
\silent{
Define for a matrix $A$ which satisfies the $\RE(s_0, k_0, A)$ condition, for
a given integer $s_0$, and a positive number $k_0$,
\ben
\label{eq::sparse-dim-A}
d(k_0, A) & = & s_0 + s_0 \max_j  \twonorm{A e_{j}}^2 \frac{16 K^2(s_0, k_0, A) (k_0)^2 (k_0 + 1)}{\delta^2}
\een
}
\begin{theorem}
\label{thm::isometry-intro}
Let $1/5 > \delta > 0$.
Let $0 < s_0 < p$ and $k_0>0$.
Let $A$ be a $q \times p$ matrix such that
$\RE(s_0, 3k_0, A)$ holds for $0< K(s_0, 3k_0, A) < \infty$.
Set 
\ben
\label{eq::sparse-dim-A}
d & = & s_0 + s_0 \max_j  \twonorm{A e_{j}}^2 \frac{16 K^2(s_0, 3k_0, A) (3k_0)^2 (3k_0 + 1)}{\delta^2},
\een
and let $E=\cup_{|J| = d} E_J$ for $d< p$ and $E$ denotes  $\R^p$ otherwise.
Let $\tilde\Psi$ be a matrix such that
 \ben
\label{eq::sparse-isometry-intro}
   \forall x \in A E  \quad (1-\delta) \norm{x}_2 \le \norm{\tilde\Psi x}_2 \le
   (1+\delta) \norm{x}_2.
 \een
Then $\RE(s_0, k_0, \tilde{\Psi} A)$ condition holds for
matrix $\tilde{\Psi} A$ with $0< K(s_0, k_0, \tilde{\Psi} A)
\leq K(s_0, k_0, A)/(1-5 \delta)$.
\end{theorem}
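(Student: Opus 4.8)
The plan is to verify the defining inequality of $\RE(s_0,k_0,\tilde\Psi A)$ one vector at a time. Fix $\upsilon\neq0$ together with a set $J$, $\size{J}\le s_0$, satisfying $\norm{\upsilon_{J^c}}_1\le k_0\norm{\upsilon_J}_1$; after relabelling I may assume $J$ indexes the $s_0$ largest coordinates of $\upsilon$, which only increases $\twonorm{\upsilon_J}$ and preserves the cone constraint. Since $\W(s_0,k_0)\sse\W(s_0,3k_0)$, the assumption $\RE(s_0,3k_0,A)$ applies to $\upsilon$ (and a fortiori $\RE(s_0,k_0,A)$ does), giving the benchmark $\twonorm{A\upsilon}\ge\twonorm{\upsilon_J}/K(s_0,k_0,A)$; the larger cone is used only to calibrate $d$. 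The goal then reduces to the pointwise estimate $\twonorm{\tilde\Psi A\upsilon}\ge(1-5\delta)\twonorm{\upsilon_J}/K(s_0,k_0,A)$, after which taking the infimum over $\upsilon$ and $J$ yields the asserted control on $K(s_0,k_0,\tilde\Psi A)$.

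The creative core is a sparse approximation step. I would approximate the image $A\upsilon=\sum_i\upsilon_i Ae_i$ by the image $Au$ of a $d$-sparse vector $u$ using the empirical (Maurey) method: writing $A\upsilon$ as $\norm{\upsilon}_1$ times a convex combination of the sign-adjusted columns $\pm Ae_i$, each of norm at most $\max_j\twonorm{Ae_j}$, one matches the mean by an average of $d$ of these columns, producing a $d$-sparse $u$ with $\twonorm{A\upsilon-Au}\le\max_j\twonorm{Ae_j}\,\norm{\upsilon}_1/\sqrt d$. The cone constraint gives $\norm{\upsilon}_1\le(1+k_0)\sqrt{s_0}\,\twonorm{\upsilon_J}$, so the error is at most $\max_j\twonorm{Ae_j}(1+k_0)\sqrt{s_0/d}\,\twonorm{\upsilon_J}$. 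This is precisely why $d$ is taken of order $s_0\max_j\twonorm{Ae_j}^2K^2(3k_0)^2(3k_0+1)/\delta^2$: that choice forces the approximation error below a small multiple of $\delta/K(s_0,k_0,A)$ times $\twonorm{\upsilon_J}$. Crucially, unlike the bound $\twonorm{A\upsilon_{T^c}}\le\sqrt{\rho_{\max}(d,A)}\,\twonorm{\upsilon_{T^c}}$ coming from a naive ``keep the top $d$ coordinates'' truncation, the empirical estimate exploits cancellation and so avoids paying a sparse-eigenvalue factor that would otherwise cancel the gain from large $d$.

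With $u$ in hand, $Au\in AE$, so the isometry hypothesis \eqref{eq::sparse-isometry-intro} gives $\twonorm{\tilde\Psi Au}\ge(1-\delta)\twonorm{Au}\ge(1-\delta)\bigl(\twonorm{A\upsilon}-\twonorm{A\upsilon-Au}\bigr)$. I would then combine this with the triangle inequality $\twonorm{\tilde\Psi A\upsilon}\ge\twonorm{\tilde\Psi Au}-\twonorm{\tilde\Psi(A\upsilon-Au)}$ and the benchmark above, splitting the $\delta$-budget as $1-\delta$ coming from the isometry and a further $4\delta$ absorbing the two small terms involving $A\upsilon-Au$, so as to reach $(1-5\delta)\twonorm{\upsilon_J}/K(s_0,k_0,A)$.

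The step I expect to be the main obstacle is controlling $\twonorm{\tilde\Psi(A\upsilon-Au)}$, the action of $\tilde\Psi$ on the approximation remainder. Although that remainder is small in Euclidean norm, it is a fully spread vector in the range of $A$, not the image of a $d$-sparse vector, so the hypothesis does not bound it directly; decomposing it into $d$-sparse blocks and summing reintroduces exactly the $\ell_1$/sparse-eigenvalue loss I was trying to avoid. Turning the near-isometry on the whole family $AE$ of $d$-dimensional subspaces into genuine control of $\tilde\Psi$ on all of $A\,\W(s_0,k_0)$, rather than on a single sparse piece at a time, is the geometric heart of the argument, and is presumably where the reduction principle earns the strength of assuming the restricted isometry on the entire high-dimensional family $E$.
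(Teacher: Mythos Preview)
Your Maurey approximation step is exactly right, and you have correctly isolated the obstacle: the remainder $A\upsilon-Au$ is not the image of a $d$-sparse vector, so the hypothesis \eqref{eq::sparse-isometry-intro} gives no direct control on $\twonorm{\tilde\Psi(A\upsilon-Au)}$. Your proposal stops at this obstacle without resolving it, so as written it is incomplete; the last paragraph is an accurate diagnosis but not a proof.

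The paper's resolution is a two-stage argument that never requires bounding $\tilde\Psi$ on such a remainder. First, one proves an \emph{upper} bound $\twonorm{\tilde\Psi Av}\le(1+3\delta)\twonorm{Av}$ for every $v\in\Cone(s_0,3k_0)$. This goes through a convex-hull inclusion
\[
A\bigl(\Cone(s_0,3k_0)\bigr)\cap S^{q-1}\subset(1-\delta)^{-1}\conv\Bigl(\bigcup_{|J|\le d}AE_J\cap S^{q-1}\Bigr),
\]
established by combining Maurey with a duality step: for each $\theta\in S^{q-1}$ one compares the maximizer $z$ of $\langle Az/\twonorm{Az},\theta\rangle$ over the cone to its sparse approximant $z'$ via a local-extremum lemma. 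Since $\twonorm{\tilde\Psi\cdot}$ is convex, the upper isometry on $AE$ transfers to the convex hull, hence to the whole cone.

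Second, for the lower bound on $x\in\Cone(s_0,k_0)$, one builds the $d$-sparse Maurey approximant $y$ with $y_I=x_I$ and $\norm{y_{I^c}}_1=\norm{x_{I^c}}_1$, and then \emph{reflects}: set $v=2y-x$, so $y=\tfrac12(x+v)$. The key observation, and the real reason for the constant $3k_0$, is that $\norm{v_{I^c}}_1\le 2\norm{y_{I^c}}_1+\norm{x_{I^c}}_1=3\norm{x_{I^c}}_1\le 3k_0\norm{v_I}_1$, so $v\in\Cone(s_0,3k_0)$. From $2\twonorm{\tilde\Psi Ay}\le\twonorm{\tilde\Psi Ax}+\twonorm{\tilde\Psi Av}$ one bounds the left side below by the sparse isometry applied to $y$, and $\twonorm{\tilde\Psi Av}$ \emph{above} by step one, yielding the desired lower bound on $\twonorm{\tilde\Psi Ax}$. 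Thus only upper bounds on cone vectors (available by convexity) and two-sided bounds on sparse vectors (given) are ever used. In particular, your claim that ``the larger cone is used only to calibrate $d$'' is not right: the enlargement from $k_0$ to $3k_0$ is precisely what makes the reflected point $v$ admissible for the upper bound.
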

\begin{remark}
We note that this result does not involve $\rho_{\max}(s_0, A)$, nor
the global parameters of the matrices $A$ and $\tilde{\Psi}$,
such as the norm or the smallest singular value.
We refer to~\cite{RWY10} for examples of matrix $A$,
where $\rho_{\max}(s_0,A)$ grows with $s_0$ while the RE condition still holds for $A$.
\end{remark}

The assumption $\RE(s_0, 3k_0, A)$ can be replaced by  $\RE(s_0, (1+\ve) k_0, A)$
for any $\ve > 0$ by appropriately increasing $d$. See Remark~\ref{remark::RE}
for details.

We apply the reduction principle to analyze different
classes of random design matrices. This analysis is reduced to
checking that the almost isometry property holds for all vectors
from some low-dimensional subspaces, which is easier than
checking the RE property directly.

The first example is the matrix $\Psi$ whose rows are
independent isotropic vectors with {\em subgaussian} marginals as in
Definition~\ref{def:psi2-vector}.
This result extends a theorem of~\cite{RWY10} to a non-Gaussian
setting, in which the entries of the design matrix may even not
have a density.
\begin{definition}
\label{def:psi2-vector}
Let $Y$ be a random vector in $\R^p$
\bnum
\item
$Y$ is called isotropic
if for every $y \in \R^p$, $\expct{\abs{\ip{Y, y}}^2} = \twonorm{y}^2$.
\item
$Y$ is $\psi_2$ with a constant $\alpha$ if for every $y \in \R^p$,
\beq
\norm{\ip{Y, y}}_{\psi_2} := \;
\inf \{t: \expct{\exp(\ip{Y,y}^2/t^2)} \leq 2 \}
\; \leq \; \alpha \twonorm{y}.
\eeq
\enum
\end{definition}
The  $\psi_2$ condition on a scalar random variable $V$ is equivalent to
the subgaussian tail decay of $V$, which means
\bens
\prob{|V| >t} \leq 2 \exp(-t^2/c^2), \; \; \text{for all} \; \; t>0.
\eens
Throughout this paper, we use $\psi_2$, vector with subgaussian marginals
and subgaussian vector interchangeably.
Examples of isotropic random vectors with subgaussian marginals are:
\begin{itemize}
\item  The random vector $Y$ with i.i.d $N(0,1)$ random coordinates.
\item Discrete Gaussian vector, which is a random vector taking
values on the integer lattice $\Z^p$ with distribution $\P(X = m) = C \exp(-\twonorm{m}^2/2)$
for $m \in \Z^p$.
\item A vector with independent centered bounded random coordinates.
The subgaussian property here follows from the Hoeffding inequality for
sums of independent random variables. This example includes,
in particular, vectors with random Bernoulli coordinates,
in other words, random vertices of the discrete cube.
\end{itemize}
It is hard to argue that such multivariate Gaussian or Bernoulli
random designs are not relevant for statistical applications.
\begin{theorem}
\label{thm:subgaussian-T-intro}
Set $0< \d < 1$,  $k_0 > 0$, and $0< s_0 < p$.
Let $A$ be a $q \times p$ matrix satisfying $\RE(s_0, 3k_0, A)$ condition
as in Definition~\ref{def:memory}.
Let $d$ be as defined in~\eqref{eq::sparse-dim-A},
and let $m = \min(d, p)$.
Let $\Psi$ be an $n \times q$ matrix whose rows are
independent isotropic $\psi_2$ random vectors in $\R^q$ with constant $\alpha$.
Suppose the sample size satisfies
\ben
\label{eq::UpsilonSampleBound-intro}
n \geq \frac{2000 m \alpha^4}{\d^2} \log \left(\frac{60 e p}{m \d}\right).
\een
Then with probability at least $1- 2 \exp(\d^2 n/2000 \alpha^4)$,
$\RE(s_0, k_0, (1/\sqrt{n})\Psi A)$ condition holds for matrix $(1/\sqrt{n}) \Psi A$
with
\ben
\label{eq::RE-subg}
0< K(s_0, k_0,  (1/\sqrt{n}) \Psi A) \leq \frac{K(s_0, k_0, A)}{1-\d}.
\een
\end{theorem}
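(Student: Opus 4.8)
The plan is to obtain the statement as a probabilistic instance of the reduction principle, Theorem~\ref{thm::isometry-intro}, applied with $\tilde\Psi=(1/\sqrt n)\Psi$. Since $A$ satisfies $\RE(s_0,3k_0,A)$ and $d$ is exactly the quantity in~\eqref{eq::sparse-dim-A}, the only hypothesis of Theorem~\ref{thm::isometry-intro} left to verify is the almost-isometry bound~\eqref{eq::sparse-isometry-intro}, namely that $(1/\sqrt n)\Psi$ preserves Euclidean norms up to a factor $1\pm\delta$ on the set $AE$, where $E=\cup_{|J|=d}E_J$. So the entire content of the proof is to show that, with the asserted probability, $(1/\sqrt n)\Psi$ acts as an almost-isometry on $AE$; once this holds on a single event, Theorem~\ref{thm::isometry-intro} immediately delivers $\RE(s_0,k_0,(1/\sqrt n)\Psi A)$ with the claimed control on $K$. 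The structural gain that makes this tractable is that $AE=\cup_{|J|=d}AE_J$ is a union of at most $\binom{p}{d}$ subspaces of $\R^q$, each of dimension at most $m=\min(d,p)$, so it suffices to establish the isometry on each such low-dimensional subspace and then take a union bound.

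Fix one subspace $W=AE_J$ of dimension at most $m$. For a fixed unit vector $x\in W$, the variable $Z_i=\ip{\Psi^i,x}$, with $\Psi^i$ the $i$th row of $\Psi$, satisfies $\expct{Z_i^2}=\twonorm{x}^2=1$ by isotropy and $\norm{Z_i}_{\psi_2}\le\alpha$ by the $\psi_2$ assumption of Definition~\ref{def:psi2-vector}. Hence each $Z_i^2-1$ is centered and subexponential with $\psi_1$-norm $O(\alpha^2)$, and since the rows are independent, a Bernstein bound for sums of subexponential variables gives $\prob{\,\lvert\frac1n\sum_{i=1}^n Z_i^2-1\rvert>t\,}\le 2\exp(-c\,n t^2/\alpha^4)$ for $0<t\le 1$. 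Because $\twonorm{(1/\sqrt n)\Psi x}^2=\frac1n\sum_i Z_i^2$, this is precisely concentration of $\twonorm{(1/\sqrt n)\Psi x}^2$ around $1$ at scale $\delta$. To upgrade this pointwise estimate to a uniform one over $W$, I would take a $\frac14$-net $\Net$ of the unit sphere of $W$, of cardinality at most $9^m$, apply the tail bound at level $t\asymp\delta$ to each net point, and then use the standard fact that for a symmetric operator a net controls the operator norm up to an absolute constant; this yields $\twonorm{(1/\sqrt n)\Psi x}\in[1-\delta,1+\delta]$ simultaneously for all $x\in W$, off an event of probability at most $2\cdot 9^m\exp(-c'n\delta^2/\alpha^4)$.

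It remains to union bound over the $\binom{p}{d}$ choices of $J$. Using $\binom{p}{d}\le(ep/m)^m$, the total failure probability is at most $2\exp\!\big(m\log(9ep/m)-c'n\delta^2/\alpha^4\big)$, and here it is essential to have counted the subspaces by $(ep/m)^m$ rather than $p^m$: the per-subspace net entropy $m\log 9$ and the subspace-counting entropy $m\log(ep/m)$ then combine into a total entropy of order $m\log(p/m)$, which is exactly what makes the near-optimal sample size $n=O\!\big(m\log(p/m)\big)$, rather than $O(m\log p)$, suffice. Substituting~\eqref{eq::UpsilonSampleBound-intro}, which forces $n\delta^2/\alpha^4\ge 2000\,m\log(60ep/(m\delta))$, one checks that the entropy term is absorbed with room to spare and the failure probability is at most $2\exp(-\delta^2 n/2000\alpha^4)$, as asserted (the extra $\log(1/\delta)$ in the sample bound is harmless slack, e.g.\ it covers using a $\delta$-fine net). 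On the complementary event~\eqref{eq::sparse-isometry-intro} holds, so Theorem~\ref{thm::isometry-intro}, applied with isometry level of order $\delta$, yields $\RE(s_0,k_0,(1/\sqrt n)\Psi A)$ with $K$ bounded by $K(s_0,k_0,A)/(1-\delta)$ up to the absolute-constant rescaling of $\delta$; the case $d\ge p$ needs no union bound, since then $AE$ is a single subspace of dimension at most $m=p$ and the previous paragraph applies directly. I expect the genuinely delicate ingredient to be this entropy balancing rather than the concentration itself: the Bernstein estimate and the net argument are routine, but the sharp $m\log(p/m)$ rate hinges on exploiting the low-dimensional geometry exposed by the reduction principle through the tight binomial bound, balancing the covering numbers and the number of subspaces against the single exponential rate $n\delta^2/\alpha^4$ coming from the $\psi_2$ tails.
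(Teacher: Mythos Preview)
Your proposal is correct and follows essentially the same route as the paper: one applies the reduction principle (Theorem~\ref{thm::isometry-intro}, with the isometry parameter rescaled to $\delta/5$) to reduce to an almost-isometry of $(1/\sqrt{n})\Psi$ on the images of $m$-sparse vectors, and that almost-isometry is established exactly as you describe, via a Bernstein-type bound for each fixed direction (Lemma~\ref{lemma::bernstein} in the paper), an $\varepsilon$-net on each $AE_J\cap S^{q-1}$, and a union bound over the $\binom{p}{m}$ choices of $J$ combined with the net cardinality to produce the $m\log(p/m)$ entropy. The paper packages the sparse almost-isometry step as a separate statement (Theorem~\ref{thm:subgaussian-T-II}) and passes through the cone-level almost-isometry Theorem~\ref{thm:subgaussian-T}, but the argument is the same as yours.
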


\begin{remark}
We note that all constants in Theorem~\ref{thm:subgaussian-T-intro}
are explicit, although they are not  optimized.
\end{remark}
Theorem~\ref{thm:subgaussian-T-intro} is applicable in various contexts.
We describe two examples.
The first example concerns cases which have been considered
in~\cite{RWY10,Zhou09c}. They show that the RE condition on the covariance matrix
$\Sigma$ implies a similar condition on a
random design matrix $X = \Psi \Sigma^{1/2}$ with high probability when
$n$ is sufficiently large.
In particular, in~\cite{Zhou09c},
the author considered subgaussian random matrices of the form
$X = \Psi \Sigma^{1/2}$
where $\Sigma$ is a $p \times p$ positive semidefinite matrix
satisfying $\RE(s_0, k_0, \Sigma^{1/2})$ condition,
and $\Psi$ is as in Theorem \ref{thm:subgaussian-T-intro}.
Unlike the current paper, the author allowed $\rho_{\max}(s_0, \Sigma^{1/2})$
as well as $K^2(s_0, k_0, \Sigma^{1/2})$ to appear in the lower bound on $n$, and
showed that $X/\sqrt{n}$ satisfies the RE condition as in~\eqref{eq::RE-subg}
with overwhelming probability
whenever
\ben
\label{eq::sample-size-gen}
n >  \frac{9c' \alpha^4}{\delta^2}
(2 + k_0)^2 K^2(s_0, k_0, \Sigma^{1/2}) \min(4\rho_{\max}(s_0,\S^{1/2}) s_0 \log (5e p/s_0), s_0 \log p)
\een
where the first term was given in~\citet[Theorem 1.6]{Zhou09a} explicitly,
and the second term is an easy consequence by combining arguments
in~\cite{Zhou09a} and~\cite{RWY10}.
Analysis there used Corollary 2.7 in~\cite{MPT07} crucially.

In the present work, we get rid of the dependency of the sample size
on $\rho_{\max}(s_0, \S^{1/2})$, although under a slightly stronger
$\RE(s_0, 3k_0, \Sigma^{1/2})$ (See also Remark~\ref{remark::RE}).
More precisely, let $\Sigma$ be a $p \times p$ covariance matrix satisfying
$\RE(s_0, 3k_0, \Sigma^{1/2})$ condition.
Then,~\eqref{eq::RE-subg} implies that  with probability at
least $1- 2 \exp(\d^2 n/2000 \alpha^4)$,
\ben
\label{eq::RE-subg-special}
  0< K(s_0, k_0,  (1/\sqrt{n}) \Psi  \Sigma^{1/2}) \leq
\frac{K(s_0, k_0, \Sigma^{1/2})}{1-\d}
\een
where $n$ satisfies \eqref{eq::UpsilonSampleBound-intro} for $d$  defined
in~\eqref{eq::sparse-dim-A}, with  $A$ replaced by $\Sigma^{1/2}$.

Another application of Theorem~\ref{thm:subgaussian-T-intro} is given
in~\cite{ZLW09}. The $q \times p$ matrix $A$ can be taken as a data matrix
with $p$ attributes (e.g., weight, height, age, etc), and
$q$ individual records. The data are compressed by a random
linear transformation $X = \Psi A$. Such transformations have
have been called ``matrix masking'' in the privacy literature
\citep{duncan:91}.
We think of $X$ as ``public,'' while $\Psi$, which is a
$n \times q$ random matrix, is private and only needed at
the time of compression.  However, even with $\Psi$ known,
recovering $A$ from $\Psi$ requires solving a highly
under-determined linear system and comes with information theoretic
privacy guarantees when $n \ll q$,
as demonstrated in~\cite{ZLW09}. On the other hand,
sparse recovery using $X$ is highly feasible given that the RE conditions
are guaranteed to hold by Theorem~\ref{thm:subgaussian-T-intro}
with a small $n$.
We refer to~\cite{ZLW09} for a detailed setup on regression using
compressed data as in~\eqref{eq::rand-des}.

The second application of the reduction principle is to the
design matrices with uniformly bounded entries. As we mentioned above, if the entries of such matrix are independent, then its rows are subgaussian. However, the independence of entries is not assumed, so
 the decay of the marginals can be arbitrary slow.
A natural example for compressed sensing would be measurements of random
Fourier coefficients, when some of the coefficients cannot be measured.
\begin{theorem}  
\label{thm::RE-bounded-entries-intro}
Let $0<\d<1$ and $0<s_0<p$. Let $Y \in \R^p$ be a random vector such that
$\norm{Y}_{\infty} \le M$ a.s and denote $\Sigma = \E Y Y^T$. 
Let $X$ be an $n \times p$ matrix,
whose rows $X_1 \etc X_n$ are independent copies of $Y$.
Let $\Sigma$ satisfy the $\RE(s_0, 3k_0, \Sigma^{1/2})$ condition
as in Definition~\ref{def:memory}.
Let $d$ be as defined in~\eqref{eq::sparse-dim-A}, where we replace $A$ with
$\Sigma^{1/2}$.
Assume that $d \leq p$ and $\r=\r_{\min}(d,\S^{1/2}) >0$.
Suppose the sample size satisfies for some absolute constant $C$
  \[
    n \ge \frac{C M^2 d  \cdot \log p}{\r \d^2} \cdot
      \log^3 \left( \frac{C M^2 d \cdot \log p}{\r \d^2 }\right).
  \]
Then with probability at least $1- \exp \left( -  \d \r n/(6 M^2 d) \right)$,
$\RE(s_0, k_0, X)$ condition holds for matrix $X/\sqrt{n}$ with
$0< K(s_0, k_0, X/\sqrt{n})) \leq K(s_0, k_0,\Sigma^{1/2})/(1-\d)$.
 \end{theorem}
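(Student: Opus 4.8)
The plan is to deduce the conclusion from the reduction principle (Theorem~\ref{thm::isometry-intro}) applied with $A=\Sigma^{1/2}$, exactly as in the subgaussian case (Theorem~\ref{thm:subgaussian-T-intro}), so that the whole problem reduces to verifying the almost-isometry hypothesis~\eqref{eq::sparse-isometry-intro} for the empirical map on the low-dimensional set $E=\cup_{|J|=d}E_J$. Writing $X_i$ for the rows of $X$ and $\hat\Sigma=\frac1n\sum_{i=1}^n X_iX_i^T$ for the sample second-moment matrix, I would note that $\frac1n\twonorm{Xv}^2=v^T\hat\Sigma v$ while $\twonorm{\Sigma^{1/2}v}^2=v^T\Sigma v=\E\,v^T\hat\Sigma v$. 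Hence, taking $\tilde\Psi=\tfrac1{\sqrt n}X\Sigma^{-1/2}$ so that $\tilde\Psi\Sigma^{1/2}=\tfrac1{\sqrt n}X$, the hypothesis~\eqref{eq::sparse-isometry-intro} on $AE=\Sigma^{1/2}E$ is equivalent to the \emph{multiplicative} deviation bound
\[
\Delta':=\sup_{v\in E,\ v\neq 0}\ \frac{\abs{v^T(\hat\Sigma-\Sigma)v}}{v^T\Sigma v}\ \le\ c\delta
\]
for a suitable absolute constant $c$; since the reduction principle loses only a factor $(1-5\delta)^{-1}$, one rescales $\delta$ by a constant at the end. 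If $\Sigma$ is singular, $\tilde\Psi$ is only defined on $\span(\Sigma^{1/2}E)$, but each $Y_i$ lies in $\mathrm{range}(\Sigma)$ almost surely, so $X$ annihilates $\ker\Sigma$ and $\tilde\Psi$ is well defined there, while $\rho:=\rho_{\min}(d,\Sigma^{1/2})>0$ makes $\Sigma^{1/2}$ injective on each $E_J$.

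Thus the crux is a restricted deviation estimate for the sample covariance of the bounded vectors $Y_i$, and the essential boundedness driving every estimate is that for a $d$-sparse unit vector $v$ one has $\abs{\ip{X_i,v}}\le\norm{X_i}_\infty\norm{v}_1\le M\sqrt d$, so $\ip{X_i,v}^2\le M^2d$ pointwise. To keep $\rho_{\max}$ out of the bound (as the reduction principle itself has none), I would work with $\Delta'$ through the whitened directions inside each subspace: on $E_J$ with $|J|=d$, writing $v=\Sigma_J^{-1/2}w$ turns the target into $\sup_{|J|=d}\norm{\Sigma_J^{-1/2}\hat\Sigma_J\Sigma_J^{-1/2}-I_d}$, the operator-norm deviation of an \emph{isotropic} sample covariance. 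The whitened summands obey the $L_\infty$-type bound $\abs{\ip{\Sigma_J^{-1/2}(X_i)_J,w}}\le M\sqrt{d/\rho}$, i.e.\ a squared scale $M^2d/\rho$, which is exactly what appears in both the sample size and the tail exponent, while isotropy pins the variance proxy at the level of the dimension rather than $\rho_{\max}$.

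I would bound $\E\Delta'$ by symmetrization followed by a Rudelson-type estimate for $\E_\varepsilon\norm{\frac1n\sum_i\varepsilon_i X_iX_i^T}_{(d)}$, the operator norm restricted to $d$-sparse directions, obtained from the noncommutative Khintchine inequality together with a metric-entropy/chaining bound over the $\binom pd$ coordinate subspaces. The entropy contributes the factor $d\log(ep/d)\lesssim d\log p$, the whitened pointwise bound contributes the scale $M^2d/\rho$, and chaining contributes the extra logarithmic factors. This yields a self-referential inequality of the shape $\E\Delta'\lesssim \sqrt{\frac{M^2 d\log p}{\rho\,n}\,(\E\Delta'+1)}\cdot\polylog$, in which isotropy keeps the additive constant at $1$; solving it for $\E\Delta'$ and imposing $\E\Delta'\le c\delta$ is what forces the threshold $n\gtrsim \frac{M^2 d\log p}{\rho\delta^2}\log^3(\cdot)$, the triple logarithm being the residue of resolving this implicit bound. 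Finally, to upgrade from the mean to the stated probability I would invoke Talagrand's concentration inequality for suprema of empirical processes (in Bousquet's form): one coordinate change moves $\Delta'$ by at most $2M^2d/(\rho n)$, so in the linear, large-deviation regime $t\asymp\delta$ the tail is of order $\exp(-c\,\delta\rho n/(M^2 d))$, matching the stated $\exp(-\delta\rho n/(6M^2 d))$.

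The main obstacle is precisely this control of the quadratic empirical process $v\mapsto\frac1n\sum_i\ip{X_i,v}^2$ over sparse directions when the rows are only \emph{bounded} and their marginals may decay arbitrarily slowly, as the statement stresses. Here a naive union bound over the $\binom pd$ subspaces combined with $\varepsilon$-nets is too lossy, and one genuinely needs the combination of symmetrization, the noncommutative Khintchine/Rudelson estimate, and the resolution of the resulting implicit inequality; the whitening step is what simultaneously supplies the correct $M^2d/\rho$ scale and removes any dependence on $\rho_{\max}$, consistent with the Remark following Theorem~\ref{thm::isometry-intro}.
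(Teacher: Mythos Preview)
Your proposal is correct and follows essentially the same route as the paper: apply the reduction principle with $A=\Sigma^{1/2}$ and $\tilde\Psi=\tfrac1{\sqrt n}X\Sigma^{-1/2}$, whiten so that the target becomes a uniform deviation of an isotropic sample covariance with pointwise scale $M^2d/\rho$, symmetrize, bound the Rademacher process by a chaining/entropy argument to obtain the self-referential inequality $\Delta\lesssim\delta\sqrt{\Delta+1}$, and finish with Talagrand's concentration for empirical processes to get the tail $\exp(-c\,\delta\rho n/(M^2d))$. The only technical variation is that the paper bounds the symmetrized process via Dudley's entropy integral, with covering numbers of $F=\bigcup_{|J|=d}\Sigma^{1/2}E_J\cap S^{p-1}$ estimated by Maurey's empirical method (following~\cite{RV08}) rather than by the noncommutative Khintchine inequality you invoke; both belong to the same Rudelson--Vershynin toolbox and yield the same quantitative outcome.
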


\begin{remark}
Note that unlike the case of a random matrix with subgaussian marginals,
the estimate of Theorem \ref{thm::RE-bounded-entries-intro} contains the
minimal sparse singular value $\r$. We will provide an example illustrating
that this is necessary in Remark~\ref{remark::lower-bound}.
\end{remark}

We will prove Theorems~\ref{thm::isometry-intro},~\ref{thm:subgaussian-T-intro},
and~\ref{thm::RE-bounded-entries-intro} in Sections~\ref{sec:reduction-proof},
~\ref{sec:subgaussian-proof}, and~\ref{sec:bounded-proof} respectively.

We note that the reduction principle can be applied to other types of
random variables. One can consider the case of heavy-tailed
marginals. In this case the estimate for the images of sparse
vectors can be proved using the technique developed by~\cite{Ver11a,Ver11b}.
One can also consider random vectors with log-concave densities,
and obtain similar estimates following the methods of~\cite{ALPT09,ALLPT11}.
We leave the details for an interested reader.

To make our exposition complete, we will show
some immediate consequences in terms of statistical inference on high
dimensional data that satisfy such RE and sparse eigenvalue
conditions. We discuss in Section~\ref{sec:background}
some bounds for the Lasso estimator for such a subgaussian random ensemble.
In particular, bounds developed in the present paper can be applied
to obtain tight convergence results for covariance estimation for
a multivariate Gaussian model~\cite{ZRXB10}.

\subsection{Convergence rates in sparse recovery}
\label{sec:background}
Lasso and the Dantzig selector are both
well studied and shown to have provable nice statistical properties.
For results on variable selection, prediction error and $\ell_p$ loss,
where $1 \leq p \leq 2$ under various incoherence conditions,
see, for example~\cite{GR04,MB06,ZY06,BTW07c,CT07,Kol07,vandeG08,ZH08,Wai09,CP09,BRT09,CWX09,Kol09,MY09}.
As mentioned, the restricted eigenvalue (RE) condition as formulated by~\cite{BRT09}
are among the weakest and hence the most general conditions in
literature imposed on the Gram matrix in order to
guarantee nice statistical properties for the Lasso and the Dantzig
selector. For a comprehensive comparison between some of these conditions,
we refer to~\cite{GB09}.

For random design as considered in the present paper,
one can show that various oracle inequalities in terms of $\ell_2$ convergence
hold for the Lasso and the Dantzig selector as long as $n$ satisfies the lower
bounds above. Let $s = \size{\supp{\beta}}$ for
$\beta$ in~\eqref{eq::linear-model}.
Under $\RE(s, 9, \Sigma^{1/2})$, a sample size of $n = O(s \log (p/s))$
is sufficient for us to derive bounds corresponding to those
in~\citet[Theorem 7.2]{BRT09}.
As a consequence, we see that this setup requires
$\Theta(\log(p/s))$ observations per nonzero value in $\beta$
where $\Theta$ hides a constant depending on $K^2(s, 9, \Sigma^{1/2})$
for the family of random matrices with subgaussian marginals which satisfies
$\RE(s, 9, \Sigma^{1/2})$ condition.
Similarly, we note that for random matrix $X$ with a.s. bounded entries
of size $M$, $n = O(s M^2 \log p \log^3 (s \log p))$ samples are sufficient
in order to achieve accurate statistical estimation.
We say this is a  {\em linear or sublinear sparsity}.
For $p \gg n$,  this is a desirable property as it implies that accurate
statistical estimation is feasible given a very limited amount of data.

As another example, assume that $\rho_{\max}(s, \Sigma^{1/2})$ is a bounded constant
and $\rho_{\min}(s, \Sigma^{1/2}) > 0$.
We note that this slight restriction on $\rho_{\max}(s, \S^{1/2})$
allows one to derive
an oracle result on the $\ell_2$ loss as studied by~\cite{Donoho:94,CT07,Zhou09a,Zhou10}),
which we now elaborate.
Let $\epsilon \sim N(0, \sigma^2 I)$ in \eqref{eq::linear-model}.
Assume that $\RE(s_0, 12, \Sigma^{1/2})$ holds, where
$\Sigma_{ii} =1, \forall i$ and
$s_0$ is defined as the smallest integer such that
\ben
\label{eq::define-s0}
\sum_{i=1}^p \min(\beta_i^2, \lambda^2 \sigma^2) \leq
s_0 \lambda^2 \sigma^2, \text{ where } \; \lambda = \sqrt{ 2 \log p/n}.
\een
We note that as a consequence of this definition is: $|\beta_j| < \lambda \sigma$ for all
$j > s_0$, if we order $|\beta_1| \geq |\beta_2| ... \geq |\beta_p|$; see
~\cite{CT07}.
Hence $s_0$ essentially characterizes the number of significant
coefficients of $\beta$ with respect to the noise level $\sigma$.
Following analysis in~\cite{Zhou10}, one can show that
the Lasso solution satisfies
\ben
\label{eq::oracle-ell2-bound}
\shtwonorm{\hat{\beta} - \beta}^2 \asymp s_0 \lambda^2 \sigma^2,
\een
with overwhelming probability, as long as
\ben
\label{eq::sample-size}
n \geq C m \log(c p/m)
\een
where $m = \max(s, d)$ for $d$
as defined in~\eqref{eq::sparse-dim-A} with  $\Sigma^{1/2}$ replacing $A$.

One can also show the same bounds on $\ell_1$ loss and prediction error
as in~\cite{Zhou10} under this setting.
The rate of~\eqref{eq::oracle-ell2-bound} is an obvious
improvement upon the rate of $\Theta(\lambda  \sigma \sqrt{s})$
when $s_0$ is much smaller than $s$, that is, when
there are many non-zero but small entries in $\beta$.
Moreover, given such ideal rate on the $\ell_2$-loss, it is shown
in~\cite{Zhou09a,Zhou10} that one can then recover a sparse model
of size $\asymp 2s_0$ such that the model contains most of the important variables
while achieving such oracle inequalities as in \eqref{eq::oracle-ell2-bound},
where thresholding of the Lasso estimator followed by refitting has been applied.
Such results have also been used in Gaussian Graphical model selection to
show fast convergence rates in estimating the covariance matrix
and its inverse~\cite{ZRXB10}.

Conceptually, results in the current paper allow one to extend such oracle results
in terms of $\ell_2$ loss from the family of random matrices obeying the UUP
to a broader class of random matrices that satisfy the RE condition
with sample size at essentially the same order.
When $\Sigma$ is {\it ill-behaving} in the sense that $\rho_{\max}(m, \S^{1/2})$
grows too
rapidly as a function of $m$, we resort to the bound of $O(\lambda \sigma \sqrt{s})$
which corresponds to those derived in~\cite{BRT09}, under $\RE(s, 9, \Sigma)$.

Finally, the incoherence properties for a random design matrix
that is the composition of a random matrix with a deterministic matrix have
been studied even earlier, see for example~\cite{RSV08,ZLW09},
in the context of signal reconstruction and high dimensional sparse
regressions.

\section{Reduction principle}
\label{sec:reduction-proof}
We first reformulate the reduction principle in the form of
restrictive isometry:
we show that if the matrix $\tilde{\Psi}$ acts as almost isometry on
the images of the sparse vectors under $A$, then it acts the same way on
the images of a set of vectors which satisfy the cone
constraint~\eqref{eq::cone-init}.
We then prove Theorem~\ref{thm::isometry-intro} as a corollary of
Theorem~\ref{thm::isometry}.
\begin{theorem}
\label{thm::isometry}
Let $1/5 > \delta > 0$.
 Let $0 < s_0 < p$ and $k_0>0$.
Let $A$ be a $q \times p$ matrix such that
$\RE(s_0, 3k_0, A)$ condition holds for $0< K(s_0, 3k_0, A) < \infty$.
Set
\[
  d  = s_0 +  s_0 \max_j \twonorm{A e_j}^2 \left(\frac{16 K^2(s_0, 3k_0, A) (3k_0)^2 (3k_0 + 1)}{\delta^2}\right),
\]
and let $E=\cup_{|J| = d} E_J$ for $d<p$ and $E = \R^p$ otherwise.
Let $\tilde\Psi$ be a matrix such that
 \ben
\label{eq::sparse-isometry}
   \forall x \in A E  \quad (1-\delta) \norm{x}_2 \le \norm{\tilde\Psi x}_2 \le
   (1+\delta) \norm{x}_2.
 \een
Then for any $x \in A \Big(\W(s_0,k_0)\Big) \cap S^{q-1}$,
\ben
\label{eq::sparse-isometry-II}
   (1-5\delta) \le \norm{\tilde\Psi x}_2 \le (1+ 3 \delta)
 \een
\end{theorem}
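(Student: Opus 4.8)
The plan is to treat $x=Av$ with $v\in\W(s_0,k_0)$ and $\twonorm{Av}=1$, and to reduce everything to the hypothesis that $\tilde\Psi$ preserves norms on images of $d$-sparse vectors. First I would dispose of the trivial case $d\ge p$, where $E=\R^p$ and \eqref{eq::sparse-isometry} gives $(1-\delta)\le\twonorm{\tilde\Psi x}\le(1+\delta)$ directly, so assume $d<p$ and set $N=d-s_0$. Let $I_0$ index the $s_0$ largest $|v_i|$; since passing to the largest coordinates only shrinks $\norm{v_{I_0^c}}_1$ and enlarges $\norm{v_{I_0}}_1$, the cone membership may be taken on $I_0$, i.e. $\norm{v_{I_0^c}}_1\le k_0\norm{v_{I_0}}_1$. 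Because $k_0\le 3k_0$, the pair $(v,I_0)$ is feasible in the $\RE(s_0,3k_0,A)$ minimization, so $\twonorm{v_{I_0}}\le K(s_0,3k_0,A)\,\twonorm{Av}=K$, writing $K=K(s_0,3k_0,A)$. Consequently, with $w:=v_{I_0^c}$ and $\mu:=\max_j\twonorm{Ae_j}$, one gets $\norm{w}_1\le k_0\norm{v_{I_0}}_1\le k_0\sqrt{s_0}\,\twonorm{v_{I_0}}\le k_0\sqrt{s_0}\,K$.

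The crux is the tail $w$. A naive triangle inequality would require $\twonorm{Aw}$ to be small, which the column norm cannot deliver (it only controls $\norm{\,\cdot\,}_1$, and indeed $\twonorm{Aw}$ need not be small). The decisive idea is therefore an empirical (Maurey-type) approximation of $w$ by a sparse vector that is good \emph{simultaneously} for $A$ and for $\tilde\Psi A$. Writing $w=\norm{w}_1\sum_j q_j\,\sign(w_j)\,e_j$ with $q_j=|w_j|/\norm{w}_1$ supported in $I_0^c$, let $R$ be the random vector equal to $\norm{w}_1\sign(w_j)\,e_j$ with probability $q_j$, so $\E R=w$, and set $z=\tfrac1N\sum_{l=1}^N R_l$ from i.i.d.\ copies; then $z$ is $N$-sparse and supported in $I_0^c$. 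Since each $Ae_j$ is a $1$-sparse image in $AE$, the isometry \eqref{eq::sparse-isometry} controls the second moments, giving $\E\twonorm{Az-Aw}^2\le\tfrac1N\E\twonorm{AR}^2\le \norm{w}_1^2\mu^2/N$ and $\E\twonorm{\tilde\Psi Az-\tilde\Psi Aw}^2\le (1+\delta)^2\norm{w}_1^2\mu^2/N$. I would emphasize that this is exactly the step keeping $\rho_{\max}(s_0,A)$ and the global norm $\twonorm{\tilde\Psi}$ out of the estimate: the error under $\tilde\Psi A$ is governed only by $\mu$ and the $1$-sparse isometry.

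Adding the two bounds and selecting a single realization below the mean yields a fixed $N$-sparse $z\sse I_0^c$ with $\twonorm{Az-Aw}\le\eta$ and $\twonorm{\tilde\Psi Az-\tilde\Psi Aw}\le\eta$, where $\eta=\norm{w}_1\mu\sqrt{(1+(1+\delta)^2)/N}\le \sqrt3\,\norm{w}_1\mu/\sqrt N$. Substituting $N=d-s_0=s_0\mu^2\,16K^2(3k_0)^2(3k_0+1)/\delta^2$ and $\norm{w}_1\le k_0\sqrt{s_0}K$ collapses all of $\mu,K,s_0$ and leaves $\eta\le \tfrac{\sqrt3}{12}\,\delta<\delta/4$; this is precisely why $d$ is chosen with that constant.

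Finally I would combine. The vector $v_{I_0}+z$ has disjoint supports of total size $s_0+N=d$, hence $A(v_{I_0}+z)\in AE$ and \eqref{eq::sparse-isometry} applies to it. From $Av=A(v_{I_0}+z)+A(w-z)$ with $\twonorm{A(w-z)}\le\eta$ we get $\twonorm{A(v_{I_0}+z)}\in[1-\eta,\,1+\eta]$, and from $\tilde\Psi Av=\tilde\Psi A(v_{I_0}+z)+\tilde\Psi A(w-z)$ with $\twonorm{\tilde\Psi A(w-z)}\le\eta$ we get $\bigl|\twonorm{\tilde\Psi Av}-\twonorm{\tilde\Psi A(v_{I_0}+z)}\bigr|\le\eta$. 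Hence $\twonorm{\tilde\Psi Av}\le(1+\delta)(1+\eta)+\eta$ and $\twonorm{\tilde\Psi Av}\ge(1-\delta)(1-\eta)-\eta$, which with $\eta\le\delta/4$ and $\delta<1/5$ give the claimed $(1-5\delta)\le\twonorm{\tilde\Psi x}\le(1+3\delta)$. The main obstacle is conceptual rather than computational: recognizing that a summation/triangle-inequality treatment of the tail is too lossy and that one must approximate the tail image under both maps by the empirical average, whose variance under $\tilde\Psi A$ is estimable using only the per-column norm and the $1$-sparse isometry.
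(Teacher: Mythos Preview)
Your argument is correct, and it is genuinely different from the paper's proof. The paper establishes the upper bound via a separate convex-hull inclusion lemma (Lemma~\ref{lemma::sparse-approx}), proved by a duality argument together with the local-maximum Lemma~\ref{lemma::local-maxima}; for the lower bound it approximates $x$ by a $d$-sparse $y$ only in the $A$-image, then introduces the reflected point $v=2y-x\in\W(s_0,3k_0)$ and combines the lower sparse-isometry bound at $y$ with the already-proved upper bound at $v$ through $y=\tfrac12(x+v)$. You bypass all of this by running Maurey's empirical approximation \emph{simultaneously} for $A$ and for $\tilde\Psi A$, using only the $1$-sparse case of~\eqref{eq::sparse-isometry} to control $\E\twonorm{\tilde\Psi AR}^2$ via the column norms; the variance identity for i.i.d.\ centered Hilbert-space summands then gives $\twonorm{\tilde\Psi A(w-z)}\le\eta$ without ever invoking $\rho_{\max}$ or $\twonorm{\tilde\Psi}$. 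This yields both inequalities at once by a single triangle-inequality step and in fact delivers tighter constants (roughly $1\pm\tfrac{3}{2}\delta$). The paper's route, on the other hand, produces Lemma~\ref{lemma::sparse-approx} as a standalone geometric statement and makes transparent, through the reflection $v=2y-x$, why the enlarged cone $\W(s_0,3k_0)$ enters and how it can be shrunk to $\W(s_0,(1+\varepsilon)k_0)$ (Remark~\ref{remark::RE}); your proof uses the $3k_0$ only through the constant in $d$ and would go through verbatim with $K(s_0,k_0,A)$ and a correspondingly smaller $d$.
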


\begin{proofof}{Theorem~\ref{thm::isometry-intro}}
By the $\RE(s_0, 3k_0, A)$ condition, $\RE(s_0, k_0, A)$
condition holds as well. 
Hence for $u \in \W(s_0,k_0)$ such that $u \not=0$,
\bens
\norm{ A u}_2 \ge \frac{\norm{u_{T_0}}_2}{K(s_0, k_0, A)} > 0,
 \eens
and by~\eqref{eq::sparse-isometry-II}
\bens
\norm{\tilde\Psi A u}_2 \ge (1-5\delta) \norm{ A u}_2 \ge
  (1-5\delta) \frac{\norm{u_{T_0}}_2}{K(s_0, k_0, A)} > 0.
\eens
\end{proofof}

The proof of Theorem~\ref{thm::isometry} uses several auxiliary results,
which will be established in the next two subsections.

\subsection{Preliminary results}

Our first lemma is based on Maurey's empirical  approximation argument~\cite{Pisi81}.
We show that any vector belonging to the convex hull of many vectors
can be approximated by a convex combination of a few of them.
\begin{lemma}
\label{lemma::maurey}
Let $u_1, \ldots, u_M \in \R^q$.
Let $y \in \conv(u_1, \ldots, u_M)$.
There exists a set $L \subset \{1, 2, \ldots, M\}$
such that
$$|L| \leq m
= \frac{4 \max_{j \in \{1, \ldots, M\}} \twonorm{u_j}^2}{\ve^2}$$
and a vector $y' \in \conv(u_j, j \in L)$
such that
$$\twonorm{y' - y} \leq \ve.$$
\end{lemma}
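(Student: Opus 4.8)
The plan is to use the probabilistic (empirical) version of Maurey's argument, reinterpreting the convex representation of $y$ as an expectation and then showing that a short empirical average concentrates around it. First I would write $y = \sum_{j=1}^M \lambda_j u_j$ with $\lambda_j \ge 0$ and $\sum_j \lambda_j = 1$, which is possible since $y \in \conv(u_1,\ldots,u_M)$. I then regard $(\lambda_j)$ as a probability distribution on $\{1,\ldots,M\}$ and introduce a random vector $Z$ taking the value $u_j$ with probability $\lambda_j$, so that $\E Z = \sum_j \lambda_j u_j = y$.

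Next I would draw $m$ independent copies $Z_1,\ldots,Z_m$ of $Z$ and form the empirical average $\bar Z = \frac1m \sum_{i=1}^m Z_i$, which again satisfies $\E \bar Z = y$. The heart of the argument is the second-moment computation: by independence the cross terms vanish, giving
\[
\E \twonorm{\bar Z - y}^2 = \frac{1}{m}\,\var{Z} = \frac1m\left(\E\twonorm{Z}^2 - \twonorm{y}^2\right) \le \frac1m \E\twonorm{Z}^2 \le \frac{\max_{j} \twonorm{u_j}^2}{m}.
\]

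With the prescribed choice $m = 4\max_j \twonorm{u_j}^2/\ve^2$, the right-hand side is at most $\ve^2/4 \le \ve^2$. Since a nonnegative random variable cannot always exceed its mean, there must exist at least one realization of the sample $(Z_1,\ldots,Z_m) = (u_{j_1},\ldots,u_{j_m})$ for which $\twonorm{\bar Z - y}^2$ is no larger than this expectation, hence $\twonorm{\bar Z - y} \le \ve$. Setting $y' = \frac1m \sum_{i=1}^m u_{j_i}$ and letting $L = \{j_1,\ldots,j_m\}$ be the set of distinct indices occurring in this realization, I note that $y'$ is a convex combination of the $u_j$ with $j \in L$ (the weights being nonnegative integer multiples of $1/m$), that $|L| \le m$, and that $\twonorm{y' - y} \le \ve$, which is exactly the claim.

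The argument is essentially routine once the probabilistic reformulation is in place; the only point requiring care is the variance computation, where independence of the samples is exactly what produces the decisive $1/m$ factor, and the observation that the empirical average is automatically a convex combination supported on at most $m$ of the original points. I do not anticipate a genuine obstacle here beyond bookkeeping the constants to match the stated bound on $|L|$.
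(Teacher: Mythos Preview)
Your proof is correct and follows essentially the same Maurey empirical-approximation argument as the paper: represent the convex combination as an expectation, sample $m$ times, bound the second moment, and pick a good realization. The only difference is that the paper inserts a symmetrization step to bound $\E\twonorm{\bar Z - y}^2 \le 4\E\twonorm{\tfrac1m\sum_j \e_j Z_j}^2$, which produces the factor $4$ in the stated $m$, whereas your direct variance computation is slightly cleaner and in fact shows that $m=\max_j\twonorm{u_j}^2/\ve^2$ already suffices.
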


\begin{proof}
Assume that
$$y = \sum_{j \in \{1, \ldots, M\}} \alpha_j u_j\; \; \text{ where } \; \;
 \alpha_j \geq 0,  \text{ and } \;
\sum_{j} \alpha_j = 1.$$
Let $Y$ be a random vector in $\R^q$ such that
$$\prob{Y = u_{\ell}}  = \alpha_{\ell}, \; \ell \in \{1, \ldots, M\}$$
Then
$$\expct{Y} = \sum_{\ell \in \{1, \ldots, M\}} \alpha_{\ell} u_{\ell} = y.$$
Let $Y_1, \ldots, Y_m$ be independent copies of $Y$ and
let $\ve_1, \ldots, \ve_m$ be $\pm 1$ i.i.d. mean zero Bernoulli random
variables, chosen independently of $Y_1, \ldots, Y_m$.
By the standard symmetrization argument, we have
\ben
\label{eq::mean-bound}
\expct{\twonorm{y - \inv{m}\sum_{j=1}^m Y_j}^2} \leq
4 \expct{\twonorm{\inv{m}\sum_{j=1}^m \ve_j Y_j} }^2 =
\frac{4}{m^2} \sum_{j=1}^m \expct{\twonorm{Y_j}^2} \leq
  \frac{4 \max_{\ell \in \{1, \ldots, M\}} \twonorm{u_{\ell}}^2}{m}
\leq \ve^2
\een
where
$$\expct{\twonorm{Y_j}^2}
\leq \sup \twonorm{Y_j}^2 \leq \max_{\ell \in \{1, \ldots, M\}} \twonorm{u_{\ell}}^2
$$
and  the last inequality in~\eqref{eq::mean-bound} follows from the definition of $m$.

Fix a realization $Y_j = u_{k_j}$, $j=1, \ldots, m$ for which
\bens
\twonorm{y - \inv{m}\sum_{j=1}^m Y_j} \leq \ve.
\eens

The vector $\inv{m}\sum_{j=1}^m Y_j$ belongs to the convex hull of
$\{u_{\ell} \; : \; \ell \in L\}$, where
$L$ is the set of different elements from the sequence $k_1, \ldots, k_m$.
Obviously $|L| \leq m$ and the lemma is proved.
\silent{
One of these realizations must satisfy the property that
for some sequence $L'$, where repetition of elements are allowed, such that $|L'| = m$ and
\ben
\label{eq::maurey}
\inv{m}\sum_{i=1}^m Y_i = \inv{m}\sum_{\ell \in L'} \tilde{Z_{\ell}} & := & A u
\text{ such that }\; \;
\twonorm{y - A u} \leq \ve,
\een
In particular, let $J'$ be the set of unique elements of $L'$ with $\size{J'} \leq m$,
then $ u \in \span(e_j | j \in J')$ and
\ben
u = \inv{m}\sum_{\ell \in L'} \tilde{e}_{\ell} \; \; \text{ for } \;
\tilde{e}_{\ell} \in \{e_{\ell}, -e_{\ell},\}
\een
given that $\rho_{\min}(m, A) > 0$
and $A u = \inv{m}\sum_{\ell \in L'} \tilde{Z}_{\ell} = A \left(\inv{m}\sum_{\ell \in L'} \tilde{e}_{\ell}\right)$
where $A \tilde{e}_{\ell} : = \tilde{Z}_{\ell}, \forall \ell$.
(Suppose otherwise, it is impossible for~\eqref{eq::mean-bound} to hold.)
It is also clear that
\ben
\norm{u}_1 = \norm{\inv{m}\sum_{\ell \in L'} \tilde{e_{\ell}}}_1 = 1
\een}
\end{proof}

For each vector $x \in \R^p$, let ${T_0}$ denote the locations of the $s_0$
largest coefficients of $x$ in absolute values.
Any vector $x \in \W(s_0, k_0) \cap S^{p-1}$ satisfies:
\ben
\label{eq::init-norm-inf}
\norm{x_{T_0^c}}_{\infty}  \leq \norm{x_{T_0}}_{1}/s_0
& \leq & \frac{ \twonorm{x_{T_0}}}{\sqrt{s_0}}  \\
\label{eq::init-norm-1}
\norm{x_{T_0^c}}_{1}
 \leq   k_0 \sqrt{s_0} \twonorm{x_{T_0}}
& \leq & k_0 \sqrt{s_0} ; \text{ and } \; \twonorm{x_{T_0^c}} \leq 1.
\een
The next elementary estimate will be used in conjunction with the RE condition.
\begin{lemma}
\label{lemma::lower-bound-Az}
For each vector $\upsilon \in \W(s_0, k_0)$, let
${T_0}$ denotes the locations of the $s_0$
largest coefficients of $\upsilon$ in absolute values.  Then
\ben
\label{eq::cone-top-norm}
\twonorm{\upsilon_{T_{0}}} \geq \frac{\twonorm{v}}{\sqrt{1 + k_0}}.
\een
\end{lemma}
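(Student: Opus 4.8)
The plan is to prove the equivalent inequality $\twonorm{\upsilon}^2 \le (1+k_0)\,\twonorm{\upsilon_{T_0}}^2$. Since $T_0$ and $T_0^c$ partition the coordinates, Pythagoras gives $\twonorm{\upsilon}^2 = \twonorm{\upsilon_{T_0}}^2 + \twonorm{\upsilon_{T_0^c}}^2$, so it suffices to bound the tail energy by $\twonorm{\upsilon_{T_0^c}}^2 \le k_0\,\twonorm{\upsilon_{T_0}}^2$. The whole argument then reduces to estimating $\twonorm{\upsilon_{T_0^c}}^2$.

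The key preliminary observation is that, although membership $\upsilon \in \W(s_0,k_0)$ is only witnessed by \emph{some} index set $I$ with $|I| = s_0$ and $\norm{\upsilon_{I^c}}_1 \le k_0\,\norm{\upsilon_{I}}_1$, the set $T_0$ of the $s_0$ largest coordinates serves as a witness as well. Indeed, $T_0$ maximizes $\norm{\upsilon_{S}}_1$ over all $S$ with $|S| = s_0$, so $\norm{\upsilon_{T_0}}_1 \ge \norm{\upsilon_{I}}_1$, and therefore
$$\norm{\upsilon_{T_0^c}}_1 = \norm{\upsilon}_1 - \norm{\upsilon_{T_0}}_1 \le \norm{\upsilon}_1 - \norm{\upsilon_{I}}_1 = \norm{\upsilon_{I^c}}_1 \le k_0\,\norm{\upsilon_{I}}_1 \le k_0\,\norm{\upsilon_{T_0}}_1.$$
This transfers the cone inequality from the unknown witness $I$ to $T_0$, and in particular reproduces the estimates in \eqref{eq::init-norm-inf}--\eqref{eq::init-norm-1}.

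With this in hand, I would control the tail energy by its $\ell_\infty$--$\ell_1$ product. Every coordinate off $T_0$ is at most the smallest of the top $s_0$ coordinates in absolute value, hence at most their average, giving $\norm{\upsilon_{T_0^c}}_\infty \le \norm{\upsilon_{T_0}}_1/s_0$. Combining this with the tail $\ell_1$ bound from the previous paragraph yields
$$\twonorm{\upsilon_{T_0^c}}^2 \le \norm{\upsilon_{T_0^c}}_\infty \,\norm{\upsilon_{T_0^c}}_1 \le \frac{\norm{\upsilon_{T_0}}_1}{s_0}\cdot k_0\,\norm{\upsilon_{T_0}}_1 = \frac{k_0}{s_0}\,\norm{\upsilon_{T_0}}_1^2.$$
A final Cauchy--Schwarz step, $\norm{\upsilon_{T_0}}_1 \le \sqrt{s_0}\,\twonorm{\upsilon_{T_0}}$, converts the right-hand side to $k_0\,\twonorm{\upsilon_{T_0}}^2$, which is exactly the tail bound needed; the lemma then follows from the Pythagorean identity.

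I do not expect a serious obstacle here, but the one point deserving care is the witness-replacement step. The definition of $\W(s_0,k_0)$ supplies only \emph{a} witness set $I$, whereas every subsequent estimate (both the $\ell_\infty$ bound and the $\ell_1$ bound) is naturally phrased in terms of $T_0$. One must therefore first move the cone inequality from $I$ to $T_0$, which rests on the maximality of $T_0$ for the $\ell_1$-norm among index sets of size $s_0$. This is the only nonmechanical idea in the proof; the remaining steps are routine $\ell_\infty$--$\ell_1$ and Cauchy--Schwarz estimates.
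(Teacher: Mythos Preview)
Your proposal is correct and follows essentially the same route as the paper: bound $\twonorm{\upsilon_{T_0^c}}^2$ by $\norm{\upsilon_{T_0^c}}_\infty\,\norm{\upsilon_{T_0^c}}_1$, use $\norm{\upsilon_{T_0^c}}_\infty \le \norm{\upsilon_{T_0}}_1/s_0$ and $\norm{\upsilon_{T_0^c}}_1 \le k_0\,\norm{\upsilon_{T_0}}_1$, then apply Cauchy--Schwarz and Pythagoras. The only addition is your explicit witness-replacement argument showing that the cone inequality transfers from the unknown $I$ to $T_0$; the paper simply takes this for granted when stating \eqref{eq::init-norm-inf}--\eqref{eq::init-norm-1}, so your version is slightly more careful but not materially different.
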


\begin{proof}
By definition of $\W(s_0, k_0)$, by~\eqref{eq::init-norm-inf}
\bens
\twonorm{\upsilon_{T_0^c}}^2
\leq \norm{\upsilon_{T_0^c}}_1 \norm{\upsilon_{T_0^c}}_{\infty}
 \leq
k_0 \norm{\upsilon_{T_0}}_1 \cdot \norm{\upsilon_{T_0}}_{1}/s_0 \leq
k_0 \twonorm{\upsilon_{T_0}}^2.
\eens
Therefore $\twonorm{\upsilon}^2  = \twonorm{\upsilon_{T_0^c}}^2  + \twonorm{\upsilon_{T_0}}^2
\leq ( k_0 + 1) \twonorm{\upsilon_{T_0}}^2.$
\end{proof}

The next lemma concerns the extremum of a linear functional on a
big circle of a $q$-dimensional sphere. We consider a line passing
through the extreme point, and show that the value of the functional
on a point of the line, which is relatively close to the extreme point,
provides a good bound for the extremum.
\begin{lemma}
\label{lemma::local-maxima}
let $u, \theta, x \in \R^q$ be vectors such that
\bnum
\item
$\twonorm{\theta} = 1$.
\item
$\ip{x, \theta} \not=0.$
\item
Vector $u$ is not parallel to $x$.
\enum
Define $\phi: \R \to \R$ by:
\ben
\phi(\lambda) = \frac{\ip{x + \lambda u, \theta} }{\twonorm{x + \lambda u}}.
\een
Assume $\phi(\lambda)$ has a local maximum at $0$, then
$$\frac{\ip{x + u, \theta}}{\ip{x, \theta}} \geq 1 - \frac{\twonorm{u}}{\twonorm{x}}.$$
\end{lemma}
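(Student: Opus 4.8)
The plan is to exploit the fact that a local maximum at $\lambda=0$ forces the first-order stationarity condition $\phi'(0)=0$, and that this single equation already determines $\ip{u,\theta}$ in terms of $\ip{x,\theta}$ and $\ip{x,u}$. Geometrically, $\phi(\lambda)$ is the value of the linear functional $\ip{\cdot,\theta}$ evaluated at the unit vector $(x+\lambda u)/\twonorm{x+\lambda u}$; as $\lambda$ ranges over $\R$ this unit vector traces an arc of the great circle cut out by the plane $\span\{x,u\}$, so $\phi$ is exactly the restriction of a linear functional to this circle. Before differentiating I would first record that $\phi$ is smooth on all of $\R$: since $u$ is not parallel to $x$, the vector $x+\lambda u$ never vanishes, so the denominator $\twonorm{x+\lambda u}$ never vanishes and $\phi$ is differentiable everywhere.

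Next I would compute $\phi'(0)$ by the quotient rule, writing $\phi = N/D$ with $N(\lambda)=\ip{x,\theta}+\lambda\ip{u,\theta}$ and $D(\lambda)=\twonorm{x+\lambda u}$, so that $D'(0)=\ip{x,u}/\twonorm{x}$. Setting the resulting expression equal to zero and clearing denominators yields the identity
\[
\ip{u,\theta} \;=\; \ip{x,\theta}\,\frac{\ip{x,u}}{\twonorm{x}^2}.
\]
Substituting this into $\ip{x+u,\theta}=\ip{x,\theta}+\ip{u,\theta}$ gives the exact evaluation $\ip{x+u,\theta}=\ip{x,\theta}\bigl(1+\ip{x,u}/\twonorm{x}^2\bigr)$, and dividing by the nonzero quantity $\ip{x,\theta}$ produces the exact relation $\ip{x+u,\theta}/\ip{x,\theta}=1+\ip{x,u}/\twonorm{x}^2$. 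Because this is an equality, the sign of $\ip{x,\theta}$ plays no role.

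Finally, I would invoke Cauchy--Schwarz in the form $\ip{x,u}\ge -\twonorm{x}\,\twonorm{u}$, which gives $\ip{x,u}/\twonorm{x}^2 \ge -\twonorm{u}/\twonorm{x}$ and hence the claimed bound $\ip{x+u,\theta}/\ip{x,\theta}\ge 1-\twonorm{u}/\twonorm{x}$. The computation is short, so the only real points requiring care are justifying differentiability of $\phi$ (which is precisely where the hypothesis that $u$ is not parallel to $x$ enters) and recognizing that the \emph{maximum} hypothesis is used only through the necessary condition $\phi'(0)=0$ --- no second-order or global information about $\phi$ is needed. I expect the quotient-rule bookkeeping to be the most error-prone part, but it is entirely routine.
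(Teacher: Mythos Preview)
Your argument is correct and in fact cleaner than the paper's. The paper proceeds by choosing an orthonormal frame adapted to $x$ and $\theta$: it writes $v=x/\twonorm{x}$, decomposes $\theta=\beta v+\gamma t$ with $t\perp v$, and $u=\eta v+\mu t+s$ with $s\perp v,t$. After expanding $\phi(\lambda)$ in these coordinates and Taylor expanding to first order, the local-maximum condition forces either $\gamma=0$ or $\mu=0$, and the conclusion is obtained by a short case analysis.

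Your route avoids both the orthogonal decomposition and the case split: the single quotient-rule identity $\phi'(0)=0$ gives directly $\ip{u,\theta}=\ip{x,\theta}\,\ip{x,u}/\twonorm{x}^2$, hence the exact relation
\[
\frac{\ip{x+u,\theta}}{\ip{x,\theta}} \;=\; 1+\frac{\ip{x,u}}{\twonorm{x}^2},
\]
after which Cauchy--Schwarz finishes the proof in one line. In fact both of the paper's cases, when unwound, reduce to this same formula (e.g.\ in the $\mu=0$ case one has $\eta=\ip{u,x}/\twonorm{x}$), so your computation captures what is really going on. The advantage of your approach is brevity and transparency; the paper's decomposition perhaps makes the geometric picture (motion along a great circle) more explicit, but at the cost of extra bookkeeping and a bifurcation that turns out to be unnecessary.
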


\begin{proof}
Let $v = \frac{x}{\twonorm{x}}.$
Also let
\bens
\theta & = & \beta v + \gamma t, \text{ where } t \perp v, \twonorm{t} = 1 \; \text{ and }
\beta^2 + \gamma^2 = 1, \beta \not= 0 \\
\text{ and } \;
u & = & \eta v + \mu t + s \text{ where } s \perp v \; \text{ and }  s \perp t
\eens
Define $f: \R \to \R$ by:
\ben
f(\lambda) = \frac{\lambda}{\twonorm{x} + \lambda \eta}, \;
\lambda \not= -\frac{\eta}{\twonorm{x}}.
\een
Then
\bens
\phi(\lambda) & = & \frac{\ip{x + \lambda u, \theta} }{\twonorm{x + \lambda u}}
 =  \frac{\ip{( \twonorm{x} + \lambda \eta) v + \lambda \mu t + \lambda s,  \beta v + \gamma t}}
{\twonorm{(\twonorm{x} + \lambda \eta) v + \lambda \mu t + \lambda s}} \\
 & = & \frac{\beta(\twonorm{x} + \lambda \eta) + \lambda \mu \gamma}
{\sqrt{(\twonorm{x} + \lambda \eta)^2 + (\lambda \mu)^2 + \lambda^2 \twonorm{s}^2}} \\
& = & \frac{\beta + \mu \gamma f(\lambda)}{\sqrt{1 + (\mu^2 + \twonorm{s}^2) f^2(\lambda)}}
\eens
 Since
$f(\lambda) = \frac{\lambda}{\twonorm{x}} + O(\lambda^2)$
we have
$\phi(\lambda) = \beta + \mu\gamma \frac{\lambda}{\twonorm{x}} + O(\lambda^2)$  in the neighborhood of $0$,
Hence, in order to for $\phi(\lambda)$ to have a local maximum at $0$, $\mu$ or $\gamma$ must be 0. Consider these cases separately.
\bit
\item
First suppose $\gamma = 0$, then $\beta^2  =1$ and $\abs{\ip{x, \theta}} = \twonorm{x}$. Hence,
\bens
\frac{\ip{x + u, \theta}}{\ip{x, \theta}} = 1 + \frac{\ip{u, \theta}}{\ip{x, \theta}}
\geq 1 - \frac{|\ip{u, \theta}|}{\abs{\ip{x, \theta}}} \geq
1 - \frac{\twonorm{u}}{\twonorm{x}}
\eens
where $|\ip{u, \theta}| \leq \twonorm{u}$.
\item
Otherwise, suppose that $\mu = 0$. Then we have  $|\eta| = |\ip{u,v}| \leq \twonorm{u}$
and
\bens
\frac{\ip{x + u, \theta}}{\ip{x, \theta}}
= 1 + \frac{\ip{ \eta v + s, \beta v + \gamma t}}{\ip{v \twonorm{x}, \beta v + \gamma t}}
= 1 + \frac{\eta \beta}{\twonorm{x} \beta} = 1 + \frac{\eta}{\twonorm{x}}
\geq 1 - \frac{\twonorm{u}}{\twonorm{x}}
\eens
where we used the fact that $\beta \not= 0$ given $\ip{x, \theta} \not=0$.
\eit
\end{proof}

\subsection{Convex hull of sparse vectors}
For a set $J \subset \{1, \ldots, p\}$, denote
$E_J = \span\{e_j: j \in J\}$.
In order to prove the restricted isometry property of $\Psi$ over
the set of vectors in
$A \Big(\W(s_0,k_0)\Big) \cap S^{q-1}$, we first show that this set is contained in
the convex hull of the images of the sparse vectors with norms not
exceeding $(1-\d)^{-1}$. More precisely, we prove the following lemma.
\begin{lemma}
\label{lemma::sparse-approx}
Let $1 > \delta > 0$.
Let $0 < s_0 < p$ and $k_0>0$.
Let $A$ be a $q \times p$ matrix such that $\RE(s_0, k_0, A)$ condition
holds for $0< K(s_0, k_0, A) < \infty$.
Define
\ben \label{eq::definition-d(k_0,A)}
d = d(k_0,A) = s_0 +  s_0 \max_j \twonorm{A e_j}^2 \left(\frac{16 K^2(s_0, k_0, A) k^2_0 (k_0 + 1)}{\delta^2}\right).
\een
Then
\ben
\label{eq::convexity}
A \Big(\W(s_0,k_0)\Big) \cap S^{q-1} \subset
(1 -  \delta)^{-1} \conv\left(\bigcup_{\size{J} \leq d} A E_J \cap S^{q-1}\right)
\een
where for $d \geq p$, $E_J$ is understood to be $\R^p$.
\end{lemma}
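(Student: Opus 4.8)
The plan is to realize the unit vector $x=A\upsilon\in A\big(\W(s_0,k_0)\big)\cap S^{q-1}$ not merely as a point close to, but as the \emph{expectation} of a random image of a $d$-sparse vector, and then to bound its Minkowski gauge with respect to $\K:=\conv\Big(\bigcup_{|J|\le d}AE_J\cap S^{q-1}\Big)$. First I would split $\upsilon=\upsilon_{T_0}+\upsilon_{T_0^c}$ at the $s_0$ largest coordinates (taking $T_0$ to realize the cone constraint). The cone constraint together with Cauchy--Schwarz and the $\RE(s_0,k_0,A)$ condition give $r:=\norm{\upsilon_{T_0^c}}_1\le k_0\sqrt{s_0}\,\twonorm{\upsilon_{T_0}}\le k_0\sqrt{s_0}\,K(s_0,k_0,A)$, where I used $\twonorm{\upsilon_{T_0}}\le K(s_0,k_0,A)\twonorm{A\upsilon}=K(s_0,k_0,A)$ since $\twonorm{x}=1$.

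Next, following the empirical construction behind Lemma~\ref{lemma::maurey}, I would introduce a random vector $Z$ supported on $\{\sign(\upsilon_j)e_j:j\in T_0^c\}$ with $\prob{Z=\sign(\upsilon_j)e_j}=|\upsilon_j|/r$, so that $\E[rZ]=\upsilon_{T_0^c}$. Taking i.i.d. copies $Z_1,\dots,Z_m$ with $m=d-s_0$ and setting $U=\tfrac1m\sum_{i=1}^m Z_i$, the random vector $W=\upsilon_{T_0}+rU$ is supported on at most $s_0+m=d$ coordinates, and $Q:=AW$ satisfies $\E Q=A\upsilon=x$. Since each realization of $Q$ lies in $AE_J$ for some $|J|\le d$, we have $Q/\twonorm{Q}\in\K$, hence the pointwise gauge bound $\norm{Q}_{\K}\le\twonorm{Q}$.

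The decisive computation is then to control the gauge of $x$ by Jensen's inequality for the convex, symmetric Minkowski functional of $\K$: $\norm{x}_{\K}=\norm{\E Q}_{\K}\le\E\norm{Q}_{\K}\le\E\twonorm{Q}\le(\E\twonorm{Q}^2)^{1/2}$. Because $Q-\E Q=rA(U-\E U)$ is an average of $m$ independent mean-zero terms, a bias--variance split gives $\E\twonorm{Q}^2=\twonorm{x}^2+\var{Q}=1+\frac{r^2}{m}\E\twonorm{A(Z_1-\E Z_1)}^2\le 1+\frac{r^2\max_j\twonorm{Ae_j}^2}{m}$. Substituting $m=d-s_0$ from~\eqref{eq::definition-d(k_0,A)} and $r^2\le k_0^2 s_0 K^2(s_0,k_0,A)$ makes the second term at most $\delta^2/(16(k_0+1))$, so $\norm{x}_{\K}\le\sqrt{1+\delta^2/16}\le 1+\delta\le(1-\delta)^{-1}$, which is exactly~\eqref{eq::convexity}. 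The degenerate case $d\ge p$ is immediate, since then $\K$ is the whole unit ball of the range of $A$ and $x$ already lies in it.

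The main obstacle is precisely this conversion step. A single Maurey approximant $y'$ only shows that $x$ lies within distance $\delta$ of $\K$, and additive closeness to $\K$ does \emph{not} by itself yield the multiplicative containment $x\in(1-\delta)^{-1}\K$: a point on the sphere near a thin $\K$ can sit outside every fixed dilate of $\K$. The resolution, and the crux of the argument, is to retain the full expectation $x=\E Q$ rather than one approximant and to pass to the Minkowski gauge, where Jensen together with a \emph{variance} (rather than worst-case) estimate converts the $O(\delta^2)$ fluctuation of $\twonorm{Q}$ into the desired $(1-\delta)^{-1}$ factor; this is also what keeps $\max_j\twonorm{Ae_j}^2$, rather than $\rho_{\max}(s_0,A)$, as the only spectral quantity. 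I would still verify the minor points that the gauge of $\K$ is finite on the range of $A$ (the vectors $Ae_j$ span it) and that using $T_0$ only tightens the cone inequality.
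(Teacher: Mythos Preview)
Your argument is correct and takes a genuinely different route from the paper. Both proofs rest on Maurey's empirical approximation, but they diverge at the decisive step of turning ``$x$ is close to a sparse image'' into the multiplicative inclusion $x\in(1-\delta)^{-1}\K$. The paper argues dually: for each $\theta\in S^{q-1}$ it locates the maximizer $z$ of $\langle F(\cdot),\theta\rangle$ over a compact base $V$ of the cone, produces a single deterministic Maurey approximant $z'$, and then appeals to a separate geometric lemma (Lemma~\ref{lemma::local-maxima}) about local maxima of $\lambda\mapsto\langle x+\lambda u,\theta\rangle/\twonorm{x+\lambda u}$ along the segment through $z$ and $z'$ to conclude $\langle F(z'),\theta\rangle\ge(1-\delta)\langle F(z),\theta\rangle$. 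You stay on the primal side: by keeping the full random $d$-sparse approximant $Q=AW$ with $\E Q=x$, the gauge bound follows from Jensen, $\|x\|_\K=\|\E Q\|_\K\le\E\|Q\|_\K\le\E\twonorm{Q}\le(\E\twonorm{Q}^2)^{1/2}$, and a bias--variance split controls the second moment. This bypasses Lemma~\ref{lemma::local-maxima} entirely, is shorter, and in fact delivers the sharper estimate $\|x\|_\K\le\sqrt{1+\delta^2/(16(k_0+1))}$ with the same value of $d$. The paper's dual route, on the other hand, makes the extremal point of $\K$ in the direction $\theta$ explicit, which is closer in spirit to the segment-reflection device used later in the proof of Theorem~\ref{thm::isometry}.
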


\begin{proof}
Without loss of generality, assume that $d(k_0,A)<p$, otherwise the lemma is vacuously true.
For each vector $x \in \R^p$, let ${T_0}$ denote the locations of the $s_0$
largest coefficients of $x$ in absolute values.
Decompose a vector $x \in  \W(s_0, k_0) \cap S^{p-1}$ as
$$x = x_{T_0} + x_{T_0^c} \in x_{T_0} + k_0 \norm{x_{T_0}}_1 \absconv(e_j \; | \; j\in T_0^c), \;
\text{where } \twonorm{x_{T_0}} \geq \inv{\sqrt{k_0+1}} \text{ by~\eqref{eq::cone-top-norm}}$$
and hence
$$A x \in A x_{T_0} + k_0 \norm{x_{T_0}}_1 \absconv(A e_j \; | \; j\in T_0^c).$$
 Since the set $ A \Cone(s_0,k_0) \cap S^{q-1}$ is not easy to analyze, we introduce set of a simpler structure instead.
Define
\[
V = \left\{x_{T_0} + k_0 \norm{x_{T_0}}_1 \absconv(e_j \; | \; j\in T_0^c) | x \in \W(s_0, k_0) \cap S^{p-1} \;
\right\}.
\]
For a given $x  \in \W(s_0, k_0) \cap S^{p-1}$, if $T_0$ is not uniquely defined, we
include all possible sets of $T_0$ in the definition of $V$.
Clearly $V \subset \W(s_0,k_0)$ is a compact set. Moreover, $V$ contains a base of  $\W(s_0, k_0)$, that is, for any
$y \in \W(s_0, k_0) \setminus \{0\}$ there exists $\lambda > 0$ such that $\lambda y \in V$.

For any $v \in \R^p$ such that $\twonorm{Av} \not= 0$, define
$$F(v) = \frac{A v}{\twonorm{A v}}.$$
By condition $\RE(s_0,k_0,A)$, the function $F$ is well-defined and continuous on $\Cone(s_0,k_0) \setminus \{0\}$, and, in particular, on $V$.
Hence,
 \[
 A \Cone(s_0,k_0) \cap S^{q-1} = F \big(\Cone(s_0,k_0) \setminus \{0\}\big)
 = F(V).
 \]
By duality, inclusion~\eqref{eq::convexity} can be derived from the fact that
the supremum of any linear functional over the left side of \eqref{eq::convexity} does not exceed the supremum over the right side of it.
By the equality above, it is enough to show that for any $\theta \in S^{q-1}$,
there exists $z' \in \R^p \setminus \{0\}$ such that
$|\supp(z')| \leq d$ and $F(z')$ is well defined, which satisfies
\ben
\label{eq::maurey-z-prime}
\max_{v \in V} \ip{F(v), \theta}
& \leq & (1 - \delta)^{-1} \ip{F(z'), \theta}.
\een
For a given $\theta$, we construct a $d$-sparse vector $z'$ which satisfies~\eqref{eq::maurey-z-prime}. Let
$$z := \arg\max_{v \in V} \ip{F(v), \theta}.$$
By definition of $V$ there exists $I \subset \{1, \ldots, p\}$ such that $\size{I} = s_0$, and
for some $\ve_j \in \{1, -1\}$,
\ben
\label{eq::vecz}
z = z_{I} + \norm{z_I}_1 k_0 \sum_{j\in I^c} \alpha_j \ve_j e_j, \text{ where } \;
\alpha_j \in [0,  1], \sum_{j \in I^c} \alpha_j \leq 1,
\text{  and } 1 \geq \twonorm{z_I} \geq \inv{\sqrt{k_0 +1}}.
\een
Note if $\alpha_i = 1$ for some $i \in I^c$, then $z$ is a sparse vector itself, and
we can set $z' = z$ in order for~\eqref{eq::maurey-z-prime} to hold.
We proceed
assuming $\alpha_i \in [0, 1)$ for all $i \in I^c$ in~\eqref{eq::vecz} from now on,
in which case, we construct a required sparse vector $z'$ via Lemma~\ref{lemma::maurey}.
To satisfy the assumptions of this lemma, denote $e_{p+1} = \vec{0}$, $\ve_{p+1} = 1$ and set
$$\alpha_{p+1}  = 1 - \sum_{j\in I^c} \alpha_j, \; \text{ hence } \;
\alpha_{p+1} \in [0,  1].$$
Let
\bens
y := A z_{I^c} =
\norm{z_I}_1 k_0 \sum_{j\in I^c} \alpha_j \ve_j A e_j
= \norm{z_I}_1 k_0 \sum_{j\in I^c \cup \{p+1\}} \alpha_j \ve_j A e_j
\eens
and denote $\mathcal{M} := \{j \in I^c \cup \{p+1\}:  \alpha_j >0\}$.
Let $\ve >0$ be specified later.
Applying Lemma~\ref{lemma::maurey} with vectors
$u_j = k_0 \norm{z_I}_1 \ve_j A e_j$ for $j \in \mathcal{M}$,
construct a set $J' \subset \M$ satisfying
\ben
\label{eq::sample-m-I}
|J'| \leq m := \frac{4 \max_{j \in I^c}
k_0^2 \norm{z_I}_1^2 \twonorm{ A e_j}^2}{\ve^2}
\leq \frac{ 4 k_0^2 s_0 \max_{j \in I^c} \twonorm{A e_j}^2}{\ve^2}
\een
and a vector
$$y' = k_0 \norm{z_I}_1 \sum_{j\in J'} \beta_j \ve_j A e_j
\; \text{where for } \;  J' \subset \M, \beta_j \in [0, 1] \text{ and } \sum_{j \in J'} \beta_j = 1
$$
such that $\twonorm{y' -y} \leq \ve$.

Set $u := k_0 \norm{z_I}_1 \sum_{j\in J'} \beta_j \ve_j e_j$ and let
\ben
\nonumber
z' & = & z_I + u.
\een
By construction, $A z' \in AE_J$, where $J := (I \cup J')\cap\{1, \ldots, p\}$
and
\ben
\label{eq::define=J}
|J| \leq |I| + |J'| \leq s_0 + m.
\een
Furthermore, we have
\[
\twonorm{Az - A z'}  =  \twonorm{A(z_{I^c} - u)}=  \twonorm{y - y'} \leq \ve
\]
For $\{\beta_j, j \in J'\}$ as above, we extend it to $\{\beta_j, j \in I^c \cup \{p+1\}\}$
setting $\beta_j = 0$ for all $j \in I^c \cup \{p+1\} \setminus J'$ and write
\[
z'  =  z_{I} + k_0 \norm{z_I}_1  \sum_{j\in I^c \cup \{p+1\}} \beta_j \ve_j e_j\; \;
\text{where } \; \beta_j \in [0, 1] \;  \text{ and } \; \sum_{j \in I^c \cup \{p+1\}} \beta_j = 1.
\]
If $z' = z$, we are done.
Otherwise, for some $\lambda$ to be specified, consider the vector
\[
z + \lambda (z'  - z)  =
 z_{I} + k_0 \norm{z_I}_1  \sum_{j\in I^c \cup \{p+1\}}
\left[ (1 -\lambda) \alpha_j + \lambda \beta_j \right]\ve_j e_j.
\]
We have
$\sum_{j\in I^c \cup \{p+1\}}
\left[ (1 -\lambda) \alpha_j + \lambda \beta_j \right] = 1$
and
\[
\exists  \; \delta_0 > 0 \; \text{ s. t. } \;
\forall j \in I^c \cup \{p+1\}, \; \; \;
(1 -\lambda) \alpha_j + \lambda \beta_j \in [0, 1]  \; \text{if } |\lambda| < \delta_0.
\]
To see this, we note that
\bit
\item
This condition holds by continuity for all $j$ such that $\alpha_i \in (0, 1)$.
\item
If $\alpha_j = 0$ for some $j$, then $\beta_j = 0$ by construction.
\eit
Thus $\sum_{j\in I^c}
\left[ (1 -\lambda) \alpha_j + \lambda \beta_j \right] \leq 1$
and $z + \lambda (z'  - z) =
 z_{I} + k_0 \norm{z_I}_1  \sum_{j\in I^c}
\left[ (1 -\lambda) \alpha_j + \lambda \beta_j \right]\ve_j e_j \in V$
whenever $|\lambda| < \delta_0$.

Consider now a function $\phi: (-\delta_0, \delta_0) \to \R$,
\[
\phi(\lambda) := \ip{F(z + \lambda(z' -z)), \theta}
= \frac{\ip{Az + \lambda(Az' -Az), \theta}}{\twonorm{Az + \lambda(Az' -Az)}}
\]
Since $z$  maximizes $\ip{F(v), \theta}$ for all $v \in V$,
$\phi(\lambda)$ attains the local maximum at $0$.
Then by Lemma~\ref{lemma::local-maxima}, we have
\bens
\frac{\ip{Az', \theta}}{\ip{Az, \theta}} =
\frac{\ip{Az + (Az' -Az), \theta}}{\ip{Az, \theta}}
\geq 1 -\frac{\twonorm{(Az' -Az)}}{\twonorm{Az}} =
\frac{\twonorm{Az} - \twonorm{(Az' -Az)}}{\twonorm{Az}}
\eens
hence
\bens
\frac{\ip{F(z'), \theta}}{\ip{F(z), \theta}} &  = &
\frac{\ip{{Az'}/{\twonorm{A z'}}, \theta}}{\ip{{Az}/{\twonorm{Az}, \theta}}}
 =
\frac{\twonorm{A z}}{\twonorm{Az'}} \times \frac{\ip{Az', \theta}}{\ip{Az, \theta}} \\
& \geq &
\frac{\twonorm{A z}}{\twonorm{Az} + \twonorm{(Az' -Az)} } \times \frac{\twonorm{Az} - \twonorm{(Az' -Az)}}{\twonorm{Az}} \\
& = &
\frac{\twonorm{Az} - \twonorm{(Az' -Az)} }{\twonorm{Az} + \twonorm{(Az' -Az)}} \\
& = &
\frac{\twonorm{Az} - \ve}{\twonorm{Az} + \ve} = 1 - \frac{2\ve}{\twonorm{Az} + \ve}.
\eens
By definition, $z \in \W(s_0, k_0)$.
Hence we apply $\RE(k_0, s_0, A)$ condition and~\eqref{eq::vecz} to obtain
\[
\twonorm{A z} \geq \frac{\twonorm{z_I}}{K(s_0, k_0, A)} \geq \inv{\sqrt{1 + k_0}K(s_0, k_0, A)}.
\]
Now we can set $\ve = \frac{\delta}{2\sqrt{1 + k_0}K(s_0, k_0, A)}$
which yields
\ben
\label{eq::embedding-final}
\frac{\ip{F(z'), \theta}}{\ip{F(z), \theta}}
 \geq 1 -  \delta
\een
and thus~\eqref{eq::maurey-z-prime} holds.
Finally, by~\eqref{eq::sample-m-I}, we have
$$m \leq s_0
\max_{j \in I^c} \twonorm{A e_j}^2
\left(\frac{16 K^2(s_0, k_0, A) k_0^2 (k_0 + 1)}{\delta^2}\right)$$
and hence the inclusion~\eqref{eq::convexity} holds in view of~\eqref{eq::define=J} and~\eqref{eq::embedding-final}.
\end{proof}

\subsection{Proof of the reduction principle}

To  prove the restricted isomorphism condition \eqref{eq::sparse-isometry-II}, we apply Lemma~\ref{lemma::sparse-approx} with $k_0$ being replaced by $3 k_0$. The upper bound in \eqref{eq::sparse-isometry-II} follows immediately from the lemma. To prove the lower bound, we consider a vector $x \in \Cone(s_0,k_0)$ as an endpoint of an interval, whose midpoint is a sparse vector from the same cone. Then the other endpoint of the interval will be contained in the larger cone $\Cone(s_0,3k_0)$. Comparison between the upper estimate for the norm of the image of this endpoint with the lower estimate for the midpoint will yield the required lower estimate for the point $x$.

\begin{proofof}{Theorem~\ref{thm::isometry}}
Let $v \in \W(s_0, 3k_0) \setminus \{0\}$, 
and so $\twonorm{Av}>0$ by $\RE(s_0, 3k_0, A)$ condition. 
Let $d(3k_0, A)$ be defined as in \eqref{eq::definition-d(k_0,A)}. 
As in the proof of Lemma ~\ref{lemma::sparse-approx}, we 
may assume that $d(3k_0, A)<p$.
By Lemma~\ref{lemma::sparse-approx},
applied with $k_0$ replaced with $3k_0$, we have
\bens
\nonumber
& & \frac{A v}{\twonorm{A v}} \in A \Big(\W(s_0,3k_0)\Big) \cap S^{q-1}
\subset (1- \delta)^{-1}
\conv\left(\bigcup_{|J| = d(3k_0,A)} AE_J \cap S^{q-1}\right) \; \; \\
\label{eq::convex-hull}
& & \text{ and } \;
\twonorm{\frac{\tilde\Psi A v}{\twonorm{A v}}}
\leq \inv{1- \delta}
\max_{u \in \conv\left(A E \cap S^{q-1}\right)} \twonorm{\tilde\Psi u}
 = \inv{1 - \delta} \max_{u \in A E \cap S^{q-1}} \twonorm{\tilde\Psi u}.
\eens
The last equality holds, since the maximum of $\shtwonorm{\tilde\Psi u}$
occurs at an extreme point of the set $\conv(A E \cap S^{q-1})$,
because of convexity of the function $f(x)= \shtwonorm{\tilde\Psi x}$.
Hence, by \eqref{eq::sparse-isometry}
\ben
\label{eq::upper-cone-bound}
\forall x \in A \Big(\W(s_0,3k_0)\Big) \cap S^{q-1},
\quad
\norm{\tilde\Psi x}_2 \le (1+\delta)(1-\delta)^{-1} \leq 1 + 3 \delta
\een
where the last inequality is satisfied once $\delta < 1/3$,
which proves the upper estimate in~\eqref{eq::sparse-isometry-II}.

We have to prove the opposite inequality.
  Let $x=x_I+x_{I^c} \in  \W(s_0, k_0) \cap S^{p-1}$, where the set $I$ contains
the locations of the $s_0$
largest coefficients of $x$ in absolute values.
We have
\ben
\label{eq::vecx}
x = x_{I} + \norm{x_{I^c}}_1 \sum_{j\in I^c} \frac{|x_j|}{\norm{x_{I^c}}_1} \sign(x_j) e_j, \text{ where } \;
\; 1 \geq \twonorm{x_I} \geq \inv{\sqrt{k_0 +1}} \text{ by }~\eqref{eq::cone-top-norm}
\een
Let $\ve >0$ be specified later.
We now construct a $d(3k_0,A)$-sparse vector $y = x_I + u \in \W(s_0, k_0)$, where
$u$ is supported on $I^c$ which satisfies
\ben
\label{eq::maurey-y-x}
\norm{u}_1 = \norm{y_{I^c}}_1 = \norm{x_{I^c}}_1 \; \;
\text{ and } \;  \twonorm{A x - A y} = \norm{A (x_{I^c}- y_{I^c})}_2 \le \ve
\een
To do so, set
$$w := A x_{I^c} =  \norm{x_{I^c}}_1 \sum_{j\in I^c} \frac{|x_j|}{\norm{x_{I^c}}_1} \sign(x_j) A e_j.$$
Let $\mathcal{M} := \{j \in I^c:  x_j \not=0 \}$. Applying Lemma~\ref{lemma::maurey} with vectors
$u_j = \norm{x_{I^c}}_1 \sign(x_j) A e_j$ for $j \in \mathcal{M}$,
construct a set $J' \subset \M$ satisfying
\ben
\label{eq::sample-m-final}
|J'| \leq m := \frac{4 \max_{j \in \M}
\norm{x_{I^c}}_1^2 \twonorm{ A e_j}^2}{\ve^2}
\leq \frac{ 4 k_0^2 s_0 \max_{j \in \M} \twonorm{A e_j}^2}{\ve^2}
\een
and a vector
$$w' = \norm{x_{I^c}}_1 \sum_{j\in J'} \beta_j \sign(x_j) A e_j,
\; \text{where for } \;  J' \subset \M, \beta_j \in [0, 1] \text{ and } \sum_{j \in J'} \beta_j = 1
$$
such that $\twonorm{Ax-Ay}=\twonorm{w' -w} \leq \ve$. Set $u := \norm{x_{I^c}}_1 \sum_{j\in J'} \beta_j \sign(x_j) e_j$ and let
\[
y  =  x_I + u =  x_{I} + \norm{x_{I^c}}_1  \sum_{j\in J'} \beta_j \sign(x_j) e_j\; \;
\text{where } \; \beta_j \in [0, 1] \;  \text{ and } \; \sum_{j \in J'} \beta_j = 1.
\]
By construction, $y \in \W(s_0, k_0) \cap E_J$, where $J := I \cup J'$
and
\ben
\label{eq::define=d=final}
|J| = |I| + |J'| \leq s_0 + m.
\een
This, in particular, implies that $\twonorm{Ay}>0$.
Assume that $\ve$ is chosen so that $s_0 + m \leq d(3k_0,A)$, and so by
~\eqref{eq::sparse-isometry}
\bens
  \norm{\frac{\tilde\Psi A y}{\norm{Ay}_2}}_2 \ge 1-\d.
\eens
Set
\ben
\label{eq::define=v}
   v=x_I+2y_{I^c}-x_{I^c}=y+(y_{I^c}-x_{I^c}).
  \een
Then~\eqref{eq::maurey-y-x} implies
\ben
\label{eq::Av-approx}
\norm{Av}_2 \le \norm{Ay}_2 + \norm{A(y_{I^c}-x_{I^c})} \leq
\norm{Ay}_2 +   \ve,
\een
and $v \in   \W(s_0, 3k_0)$ as
$$\norm{v_{I^c}}_1 \leq 2\norm{y_{I^c}}_1 +\norm{ x_{I^c}}_1 = 3 \norm{x_{I^c}}_1
\le 3k_0 \norm{x_I}_1 =  3k_0 \norm{v_I}_1$$
where we use the fact that $\norm{x_{I^c}}_1 =  \norm{y_{I^c}}_1$.
Hence, by the upper estimate~\eqref{eq::upper-cone-bound}, we have
\ben
\label{eq::upp-cone}
\norm{\frac{\tilde\Psi A v}{\norm{Av}_2}}_2 \le (1+ \delta)(1-\delta)^{-1}
\een
  Since $y=\frac{1}{2}(x+v)$, where $y_I = x_I$, we have by the lower bound in~\eqref{eq::sparse-isometry}
 and the triangle inequality, \\
\bens
1-\delta
  &\le & \norm{\frac{\tilde\Psi A y}{\norm{Ay}_2}}_2
  \le \frac{1}{2} \left(\norm{\frac{\tilde\Psi A x}{\norm{Ay}_2}}_2
    +\norm{\frac{\tilde\Psi A v}{\norm{Ay}_2}}_2 \right)       \\
  &\le & \frac{1}{2} \left(\norm{\frac{\tilde\Psi A x}{\norm{Ax}_2}}_2
    +\norm{\frac{\tilde\Psi A v}{\norm{Av}_2}}_2  \right)
        \cdot \frac{\norm{Ay}_2+\ve}{\norm{Ay}_2}
      \\
  &\le & \frac{1}{2} \left(\norm{\frac{\tilde\Psi A x}{\norm{Ax}_2}}_2
    + \frac{1+ \delta}{1-\delta} \right) \cdot \big(1+\delta/6\big)
\eens
where in the second line, we apply~\eqref{eq::Av-approx} and~\eqref{eq::maurey-y-x},
and in the third line, \eqref{eq::upp-cone}.
By the $\RE(s_0, k_0, A)$ condition and~\eqref{eq::vecx}
we have
$$\norm{Ay}_2 \ge \frac{ \twonorm{y_I} }{K(s_0, k_0, A)}
= \frac{ \twonorm{x_I} }{K(s_0, k_0, A)} \geq
\frac{1 }{K(s_0, k_0, A) \cdot \sqrt{k_0 + 1}}.$$
Set
$$\ve = \frac{\delta}{6\sqrt{1 + k_0}K(s_0, k_0, A)} \; \text{ so that } \;
\frac{\norm{Ay}_2+\ve}{\norm{Ay}_2}
\leq (1+\delta/6\big).
$$
Then for $\delta < 1/5$
\bens
\norm{\frac{\tilde\Psi A x}{\norm{Ax}_2}}_2 \geq 2 \frac{1-\delta}{1+\delta/6} - (1+ \delta)(1-\delta)^{-1}
\geq 1- 5 \delta.
\eens
This verifies the lower estimate. It remains to check the bound for the cardinality of $J$.
By~\eqref{eq::sample-m-final} and~\eqref{eq::define=d=final}, we have for $k_0 > 0$,
\bens
|J| & \leq & s_0 + m \leq
s_0 +  s_0 \max_{j \in \M} \twonorm{A e_j}^2 \left(\frac{16 K^2(s_0, k_0, a) (3k_0)^2 (k_0 + 1)}{\delta^2}\right) < d(3k_0,A)
\eens
as desired. This completes the proof of
Theorem~\ref{thm::isometry}.


\end{proofof}
\begin{remark}
\label{remark::RE}
 Let $\e>0$. Instead of $v$ defined in \eqref{eq::define=v}, one can consider the vector
 \[
   v_{\e}=x_I+y-\e(x-y) \in \Cone \big(s_0,(1+\e)k_0 \big).
 \]
 Then replacing $v$ by $v_\e$ throughout the proof, we can establish Theorem \ref{thm::isometry} under the assumption $\RE(s_0, (1+\e)k_0, A)$ instead of $\RE(s_0, 3k_0, A)$, if we increase the dimension $d(3k_0)$ by a factor depending on $\e$.
\end{remark}

\section{Subgaussian random design}
\label{sec:subgaussian-proof}
Theorem~\ref{thm:subgaussian-T-intro}
can be reformulated as an almost isometry condition for the
matrix $X=\Psi A$ acting on the set $\Cone(s_0,k_0)$.
Recall that
\[
  d(3k_0, A)  = s_0 +  s_0 \max_j \twonorm{A e_j}^2 \left(\frac{16 K^2(s_0, 3k_0, A) (3k_0)^2 (3k_0 + 1)}{\delta^2}\right).
\]
\begin{theorem}
\label{thm:subgaussian-T}
Set $0< \d < 1$, $0< s_0 < p$, and $k_0 > 0$.
Let $A$ be a $q \times p$ matrix satisfying $\RE(s_0, 3k_0, A)$ condition
as in Definition~\ref{def:memory}.
Let $m = \min(d(3k_0, A),p) < p$. 
Let $\Psi$ be an $n \times q$  matrix whose rows are
independent isotropic $\psi_2$ random vectors in $\R^q$ with constant $\alpha$.
Assume that the sample size satisfies
\ben
\label{eq::UpsilonSampleBound-thm}
n \geq \frac{2000 m \alpha^4}{\d^2} \log \left(\frac{60 e p}{m \d}\right).
\een
Then with probability at least $1- 2 \exp(\d^2 n/2000 \alpha^4)$,
 for all $\upsilon \in \Cone(s_0, k_0)$ such that $\upsilon \not= 0$,
\begin{eqnarray}
\label{eq::phi-bound}
1 - \d & \leq &
\frac{1}{\sqrt{n}}\frac{\twonorm{\Psi A \upsilon}}{\twonorm{A \upsilon}}\;
\leq \; 1 + \delta.
\end{eqnarray}
\end{theorem}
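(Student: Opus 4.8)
The plan is to deduce Theorem~\ref{thm:subgaussian-T} from the reduction principle of Section~\ref{sec:reduction-proof}. Writing $\tilde\Psi = n^{-1/2}\Psi$, it suffices to verify that $\tilde\Psi$ satisfies the sparse almost-isometry hypothesis~\eqref{eq::sparse-isometry} on the images $AE = \bigcup_{|J|=m} AE_J$ of the $m$-sparse vectors; Theorem~\ref{thm::isometry} then converts this into the two-sided estimate~\eqref{eq::phi-bound} on all of $A\big(\Cone(s_0,k_0)\big)\cap S^{q-1}$. (The reduction produces $1\pm c\delta$ on the cone for an absolute $c$, so the stated $1\pm\delta$ is recovered after rescaling $\delta$ by an absolute constant, which only affects the unoptimized numerical factors.) Hence the whole probabilistic content reduces to showing that, with the claimed probability,
\[
  (1-\delta)\twonorm{x}\le \tfrac{1}{\sqrt n}\twonorm{\Psi x}\le (1+\delta)\twonorm{x}\qquad\text{for every } x\in AE .
\]
By homogeneity it is enough to control $\tfrac1n\twonorm{\Psi x}^2$ uniformly over the unit vectors of each of the $\binom{p}{m}$ subspaces $AE_J$, each of dimension at most $m$.

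First I would prove the single-vector concentration. Fix $x\in S^{q-1}$ and expand $\tfrac1n\twonorm{\Psi x}^2=\tfrac1n\sum_{i=1}^n \ip{\Psi_i,x}^2$ over the independent rows $\Psi_1,\dots,\Psi_n$. Isotropy (Definition~\ref{def:psi2-vector}) gives $\expct{\ip{\Psi_i,x}^2}=1$, while the $\psi_2$ assumption makes each $\ip{\Psi_i,x}^2$ a subexponential variable with $\psi_1$-norm $O(\alpha^2)$. Bernstein's inequality for sums of subexponentials then gives, for $0<\delta<1$,
\[
  \prob{\left|\tfrac1n\twonorm{\Psi x}^2-1\right|>\delta}\le 2\exp\!\big(-c\,n\delta^2/\alpha^4\big),
\]
with an explicit absolute constant $c$; carrying this constant through the union bound is what pins down the numerical factor $2000$.

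Next I would run a net-and-union-bound argument. For each $J$, take a $\gamma$-net $\Net_J$ of $AE_J\cap S^{q-1}$ with fineness $\gamma$ proportional to $\delta$, so that $|\Net_J|\le (C/\delta)^m$; a standard successive-approximation extension from the net to the full sphere then costs only $O(\delta)$. Summing over the $\binom{p}{m}\le (ep/m)^m$ subspaces, the total number of net points is at most $(Cep/(m\delta))^m$, and a union bound gives a failure probability at most
\[
  \Big(\tfrac{Cep}{m\delta}\Big)^{m}\cdot 2\exp\!\big(-c\,n\delta^2/\alpha^4\big).
\]
The hypothesis~\eqref{eq::UpsilonSampleBound-thm}, $n\ge 2000\,m\alpha^4\delta^{-2}\log(60ep/(m\delta))$, is exactly what forces $m\log\big(Cep/(m\delta)\big)$ below a fixed fraction of $n\delta^2/\alpha^4$, collapsing the bound to $2\exp(-\delta^2n/2000\alpha^4)$. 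On the complementary event the net estimate together with its extension yields~\eqref{eq::sparse-isometry} for $\tilde\Psi$ on $AE$, and Theorem~\ref{thm::isometry} completes the proof.

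The main obstacle is the balance struck in this last step. The union runs over exponentially many ($\binom{p}{m}$) subspaces, so the per-vector concentration must decay like $\exp(-cn\delta^2/\alpha^4)$ fast enough to overwhelm the net cardinality $(Cep/(m\delta))^m$ while keeping $n$ linear in $m$ up to a single logarithm. This is precisely the calibration between the subexponential Bernstein exponent and the $\delta$-dependent net fineness (the origin of the $\log(1/\delta)$ term in~\eqref{eq::UpsilonSampleBound-thm}), and it is the one point where isotropy (to fix the mean at $1$) and the $\psi_2$ bound (to control the tail) must be used together, with all constants tracked explicitly.
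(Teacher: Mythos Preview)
Your proposal is correct and follows essentially the same route as the paper: the paper first establishes the sparse almost-isometry (Theorem~\ref{thm:subgaussian-T-II}) via the single-vector Bernstein bound (Lemma~\ref{lemma::bernstein}), a $\gamma$-net on each $AE_J\cap S^{q-1}$, and a union bound over the $\binom{p}{m}$ subspaces, then plugs it into the reduction principle Theorem~\ref{thm::isometry} with $\tau=\delta/5$ to obtain~\eqref{eq::phi-bound}. Your rescaling-of-$\delta$ remark is exactly how the paper passes from the $1\pm 5\tau$ output of Theorem~\ref{thm::isometry} to the claimed $1\pm\delta$.
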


Theorem~\ref{thm:subgaussian-T-intro}
follows immediately from Theorem~\ref{thm:subgaussian-T}.
Indeed, by~\eqref{eq::phi-bound},  for all $u \in \W(s_0,k_0)$ such that $u \not=0$,
\bens
\norm{\frac{1}{\sqrt{n}}\Psi A  u}_2 \ge (1-\d) \twonorm{A u} \ge
  (1-\d) \frac{\norm{u_{T_0}}_2}{K(s_0, k_0,A)} > 0.
\eens

To derive Theorem \ref{thm:subgaussian-T} from Theorem \ref{thm::isometry}
we need a lower estimate for the norm of the image of a sparse vector.
Such estimate relies on the standard $\e$-net argument similarly
to~\citet[Section 3]{MPT08}.


\begin{theorem}
\label{thm:subgaussian-T-II}
Set $0< \delta < 1$.
Let $A$ be a $q \times p$ matrix, and
let $\Psi$ be an $n \times q$, matrix  whose rows are
independent isotropic $\psi_2$ random vectors in $\R^q$ with constant $\alpha$.
For $m \leq p$, assume that
\ben
\label{eq::sample-size-gen-II}
n \geq \frac{80 m \alpha^4}{\tau^2} \log \left(\frac{12 e p}{m \tau}\right).
\een
Then with probability at least $1- 2 \exp(-\tau^2 n/80 \alpha^4)$,
for all $m$-sparse vectors $u$ in $\R^p$,
\begin{eqnarray}
\label{eq::phi-bound-2}
& & (1- \tau)\twonorm{A u} \; \le \; \inv{\sqrt{n}} \twonorm{\Psi A u} \; \leq \; (1 + \tau)\twonorm{A u}.
\end{eqnarray}
\end{theorem}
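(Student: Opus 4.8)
The plan is to run a standard $\varepsilon$-net argument over the union of the low-dimensional subspaces carrying the $m$-sparse images, in the spirit of \citet[Section 3]{MPT08}. Writing $y = Au/\twonorm{Au}$, the claim \eqref{eq::phi-bound-2} is equivalent to the two-sided bound $\bigl|\tfrac{1}{\sqrt n}\twonorm{\Psi y} - 1\bigr| \le \tau$ holding simultaneously for every $y$ in the set $S := \bigcup_{|J| = m} \bigl(A E_J \cap S^{q-1}\bigr)$, a union of exactly $\binom{p}{m}$ spheres, each lying in a subspace $AE_J$ of dimension at most $m$ (any $m$-sparse vector sits in some $E_J$ with $\size{J}=m$). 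So the whole problem reduces to establishing the almost-isometry of $\tfrac1{\sqrt n}\Psi$ uniformly over $S$.

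First I would prove the single-vector concentration estimate. For a fixed $y \in S^{q-1}$ the coordinates $\ip{\Psi_i, y}$ of $\Psi y$ are independent, have variance $1$ by isotropy, and are $\psi_2$ with constant $\alpha$; hence the centered variables $\ip{\Psi_i,y}^2 - 1$ are sub-exponential, and a Bernstein-type inequality yields, for $0 < t < 1$,
\[
  \prob{\Bigl|\tfrac1n \twonorm{\Psi y}^2 - 1\Bigr| > t} \le 2\exp\bigl(-c\, n t^2/\alpha^4\bigr)
\]
for an absolute constant $c$. This is the only place where the $\psi_2$ assumption enters, and it is exactly the ingredient one would replace by the corresponding heavy-tailed or log-concave estimate in the variants mentioned after Theorem~\ref{thm::RE-bounded-entries-intro}.

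Next I would discretize. On each sphere $AE_J \cap S^{q-1}$, which sits in a subspace of dimension at most $m$, choose a net $\Lambda_J$ at scale $\varepsilon$ with $\size{\Lambda_J} \le (1 + 2/\varepsilon)^m$, taking $\varepsilon$ proportional to $\tau$. A union bound over the $\binom{p}{m} \le (ep/m)^m$ subspaces and over the net points, using the single-vector estimate with $t \asymp \tau$, controls $\bigl|\tfrac1n\twonorm{\Psi y}^2 - 1\bigr|$ on $\bigcup_J \Lambda_J$; the product of the net cardinality $(1+2/\varepsilon)^m$ with $(ep/m)^m$ produces the factor $(12 e p/(m\tau))^m$, which is precisely what converts the exponent $-cnt^2/\alpha^4$ into the stated threshold $n \ge \tfrac{80 m \alpha^4}{\tau^2}\log(12ep/(m\tau))$ and the failure probability $2\exp(-\tau^2 n/80\alpha^4)$. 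I would then pass from the net to the full set by the usual successive-approximation step: if $M_J := \sup_{y \in AE_J \cap S^{q-1}} \tfrac1{\sqrt n}\twonorm{\Psi y}$, then approximating any $y$ by a net point $y'$ with $\twonorm{y - y'} \le \varepsilon$ gives $M_J \le \sqrt{1+t} + \varepsilon M_J$, hence $M_J \le \sqrt{1+t}/(1-\varepsilon)$, and symmetrically a matching lower bound; choosing $\varepsilon$ and $t$ as small multiples of $\tau$ makes both sides fall within $1 \pm \tau$. Finally, since $|\sqrt a - 1| \le |a-1|$ for $a \ge 0$, the control of the squared norm upgrades to the norm statement \eqref{eq::phi-bound-2}.

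The main obstacle is the bookkeeping in the union bound: one must choose the net scale $\varepsilon$, the deviation level $t$, and the net cardinality bound so that the exponential gain $\exp(-cnt^2/\alpha^4)$ beats the combined combinatorial and metric-entropy factor $(12ep/(m\tau))^m$ with exactly the constants appearing in \eqref{eq::sample-size-gen-II}, while simultaneously leaving enough slack in the net-to-sphere approximation for the $(1\pm\tau)$ conclusion. The concentration inequality itself is routine; the delicate part is matching all the constants so that a single threshold on $n$ drives both the covering and the tail at once.
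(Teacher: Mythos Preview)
Your proposal is correct and follows essentially the same route as the paper: reduce to uniform control over $S=\bigcup_{|J|=m} AE_J\cap S^{q-1}$, build $\varepsilon$-nets of size $(1+2/\varepsilon)^m$ on each of the $\binom{p}{m}$ spheres, apply a Bernstein-type bound $\prob{|\tfrac1n\twonorm{\Psi y}^2-1|>\theta}\le 2\exp(-n\theta^2/(10\alpha^4))$ pointwise (this is the paper's Lemma~\ref{lemma::bernstein}), take the union bound, and extend from the net to the full sphere by the triangle inequality with the self-bounding trick $M_J\le(1+\theta/2)+\varepsilon M_J$. The paper's choices $\theta=\tau/2$ and $\varepsilon=\theta/(1+2\theta)$ are exactly the ``bookkeeping'' you flag as the only real work, and they produce the stated constants.
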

We note that Theorem~\ref{thm:subgaussian-T-II}
does not require the RE condition to hold.
No particular upper bound on $\rho_{\max}(m,A)$ is imposed here either.

We now state a large deviation bounds for $m$-sparse eigenvalues
$\rho_{\min}(m, \tilde{X})$ and $\rho_{\max}(m,\tilde{X})$
for random design $\tilde{X} = n^{-1/2}\Psi A$ which
follows from Theorem~\ref{thm:subgaussian-T-II} directly.
\begin{corollary}
Under conditions in Theorem~\ref{thm:subgaussian-T-II}, 
we have with probability at least $1- 2 \exp(-\tau^2 n/80 \alpha^4)$,
\begin{eqnarray}
\label{eq::phi-max-bound-II}
 (1- \tau) \sqrt{\rho_{\min}(m, A)} \leq \sqrt{\rho_{\min}(m, \tilde{X})} \leq
\sqrt{\rho_{\max}(m, \tilde{X})} \leq (1+ \tau) \sqrt{\rho_{\max}(m,A)}.
\end{eqnarray}
\end{corollary}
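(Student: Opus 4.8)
The plan is to derive the corollary directly from Theorem~\ref{thm:subgaussian-T-II}, which already supplies a two-sided norm comparison holding \emph{uniformly} over all $m$-sparse vectors on a single high-probability event. No fresh probabilistic estimate is required: the probability bound $1-2\exp(-\tau^2 n/80\alpha^4)$ transfers verbatim precisely because one and the same event controls every $m$-sparse vector at once.

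First I would record that, by the definition $\tilde{X} = n^{-1/2}\Psi A$, we have $\twonorm{\tilde{X}u} = \inv{\sqrt{n}}\twonorm{\Psi A u}$ for every $u \in \R^p$. Hence, on the good event $\event$ of probability at least $1-2\exp(-\tau^2 n/80\alpha^4)$ furnished by Theorem~\ref{thm:subgaussian-T-II}, the conclusion \eqref{eq::phi-bound-2} can be rewritten as
\[
(1-\tau)\twonorm{Au} \;\le\; \twonorm{\tilde{X}u} \;\le\; (1+\tau)\twonorm{Au}
\qquad \text{for every $m$-sparse } u \in \R^p .
\]

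Next, still working on $\event$, I would divide each inequality by $\twonorm{u}$ for nonzero $m$-sparse $u$ and pass to the extrema dictated by Definition~\ref{def::sparse-eigen}. Taking the maximum of the right-hand inequality over all nonzero $m$-sparse $u$ gives
\[
\sqrt{\rho_{\max}(m,\tilde{X})} = \max_{u \not= 0;\, m\text{-sparse}} \frac{\twonorm{\tilde{X}u}}{\twonorm{u}} \le (1+\tau) \max_{u \not= 0;\, m\text{-sparse}} \frac{\twonorm{Au}}{\twonorm{u}} = (1+\tau)\sqrt{\rho_{\max}(m,A)},
\]
while taking the minimum of the left-hand inequality yields symmetrically $\sqrt{\rho_{\min}(m,\tilde{X})} \ge (1-\tau)\sqrt{\rho_{\min}(m,A)}$. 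The middle inequality $\sqrt{\rho_{\min}(m,\tilde{X})} \le \sqrt{\rho_{\max}(m,\tilde{X})}$ is immediate, since a minimum never exceeds the maximum taken over the same index set. Chaining the three estimates delivers~\eqref{eq::phi-max-bound-II} on $\event$.

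There is essentially no obstacle here. The single point deserving care is the observation that Theorem~\ref{thm:subgaussian-T-II} produces one event on which the almost-isometry holds for \emph{all} $m$-sparse vectors simultaneously, so that passing to the supremum and infimum over the $m$-sparse directions incurs neither an extra union bound nor any loss in the failure probability. This is exactly why identical probability estimates appear in the theorem and in the corollary.
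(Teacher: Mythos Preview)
Your proposal is correct and matches the paper's approach: the paper simply states that the corollary ``follows from Theorem~\ref{thm:subgaussian-T-II} directly'' without giving a separate proof, and your argument spells out exactly this direct deduction. The only content is the observation you highlight, that the uniform-over-all-$m$-sparse-vectors conclusion of Theorem~\ref{thm:subgaussian-T-II} allows taking extrema without any additional union bound.
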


\subsection{Proof of Theorem~\ref{thm:subgaussian-T}}
For $n$ as bounded in~\eqref{eq::UpsilonSampleBound-thm},
where $m=\min(d(3k_0, A), p)$,
we have~\eqref{eq::sample-size-gen-II} holds with $\tau = \delta/5$.
Then by Theorem~\ref{thm:subgaussian-T-II}, we have
with probability at least  $1 -  2\exp\left(-n \delta^2/(2000 \alpha^4)\right)$,
\[
\forall \text{$m$-sparse vectors $u$,} \quad
 \left(1- \frac{\delta}{5} \right)\twonorm{A u}  \; \le \; \frac{1}{\sqrt{n}} \twonorm{\tilde\Psi A u} \; \leq \;
  \left( 1 + \frac{\delta}{5} \right) \twonorm{A u}.
 \]
The proof finishes by application of Theorem~\ref{thm::isometry}.

\subsection{Proof of Theorem~\ref{thm:subgaussian-T-II}}
We start with a definition.

\begin{definition}
Given a subset $U \subset \R^p$ and a number $\ve > 0$,
an $\ve$-net $\Pi$ of $U$ with respect to the Euclidean metric
is a subset of points of $U$ such that $\ve$-balls
centered at $\Pi$ covers $U$:
$$U \subset \bigcup_{x \in \Pi} (x + \ve \Ball_2^p),$$
where $A + B := \{a + b: a \in A, b \in B \}$ is the Minkowski sum of
the sets $A$ and $B$.
The covering number $\Net(U, \ve)$ is the smallest cardinality of an
$\ve$-net of $U$.
\end{definition}

The proof of Theorem \ref{thm:subgaussian-T-II} uses two well-known results.
The first one is the {\em volumetric estimate}; see e.g.~\cite{MS86}.
\begin{lemma}
\label{eq::Pi-cover-numbers}
Given $m \geq 1$ and $\ve >0$. There exists an $\ve$-net
$\Pi \subset B_2^m$ of $B_2^m$ with respect to the Euclidean metric such
that $B_2^m \subset (1- \ve)^{-1} \conv \Pi$ and  $|\Pi| \leq (1+2/\ve)^m$.
Similarly, there exists an $\ve$-net of the sphere
$S^{m-1}$, $\Pi' \subset S^{m-1}$ such that $|\Pi'| \leq (1+2/\ve)^m$.
\end{lemma}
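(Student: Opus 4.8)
The plan is to construct both nets from a single device --- a maximal $\ve$-separated subset --- and then read off the three asserted properties (net, cardinality, convex-hull inclusion) from it. First I would let $\Pi$ be a maximal subset of $B_2^m$ whose points are pairwise at Euclidean distance strictly greater than $\ve$; such a set exists by compactness of $B_2^m$. Maximality immediately yields the $\ve$-net property: if some $x \in B_2^m$ were at distance at least $\ve$ from every point of $\Pi$, one could adjoin $x$ to $\Pi$ without violating separation, contradicting maximality. Hence $B_2^m \subset \bigcup_{\pi \in \Pi}(\pi + \ve B_2^m)$.

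For the cardinality bound I would use the standard volumetric packing estimate. Since the points of $\Pi$ are more than $\ve$ apart, the balls $\pi + (\ve/2) B_2^m$, $\pi \in \Pi$, are pairwise disjoint; and since $\Pi \subset B_2^m$, all of them lie inside $(1 + \ve/2) B_2^m$. Comparing Lebesgue volumes via $\vol(r B_2^m) = r^m \vol(B_2^m)$ gives $|\Pi|\,(\ve/2)^m \le (1+\ve/2)^m$, that is, $|\Pi| \le (1 + 2/\ve)^m$.

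The key step, and the only point I expect to be nonroutine, is the convex-hull inclusion $B_2^m \subset (1-\ve)^{-1}\conv\Pi$, which I would prove by successive approximation. Fix $x \in B_2^m$ and choose $\pi_0 \in \Pi$ with $\twonorm{x - \pi_0} \le \ve$; then $(x-\pi_0)/\ve \in B_2^m$, so choose $\pi_1 \in \Pi$ approximating it within $\ve$, which gives $\twonorm{x - \pi_0 - \ve\pi_1} \le \ve^2$. Iterating produces $\pi_0, \pi_1, \pi_2, \dots \in \Pi$ with $\twonorm{x - \sum_{k<N}\ve^k \pi_k} \le \ve^N \to 0$, hence $x = \sum_{k \ge 0} \ve^k \pi_k$. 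Since $\sum_{k\ge 0}\ve^k = (1-\ve)^{-1}$, the vector $(1-\ve)x = \sum_{k\ge 0}(1-\ve)\ve^k \pi_k$ is an infinite convex combination of points of $\Pi$ (nonnegative coefficients summing to $1$); as $\conv\Pi$ is the convex hull of a finite set, it is compact, so this limit lies in $\conv\Pi$. Therefore $(1-\ve)x \in \conv\Pi$, i.e. $x \in (1-\ve)^{-1}\conv\Pi$. The one delicate bookkeeping issue here is precisely the passage from the partial (sub-)convex combinations to the limit, handled by the closedness of $\conv\Pi$.

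Finally, for the sphere I would repeat the maximal-separation construction inside $S^{m-1}$: a maximal $\ve$-separated $\Pi' \subset S^{m-1}$ is an $\ve$-net of $S^{m-1}$ by the same maximality argument, and the identical disjoint-balls-in-$(1+\ve/2)B_2^m$ volume comparison yields $|\Pi'| \le (1+2/\ve)^m$. No convex-hull statement is needed for the sphere, so the successive-approximation step above is the sole conceptual ingredient beyond routine volume counting.
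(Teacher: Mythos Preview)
Your proposal is correct and is precisely the standard argument for this volumetric estimate. The paper does not actually prove this lemma; it simply states it and refers to \cite{MS86}, so there is nothing to compare against beyond noting that your maximal-separated-set construction, volume packing bound, and successive-approximation proof of the inclusion $B_2^m \subset (1-\ve)^{-1}\conv\Pi$ constitute the textbook proof. One minor point worth making explicit: the convex-hull inclusion tacitly requires $\ve<1$ for the geometric series to converge (and for $(1-\ve)^{-1}$ to be meaningful as a dilation factor), which you use but do not state.
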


 The second lemma with a worse constant can be derived
 from Bernstein's inequality for subexponential random variables.
 Since we are interested in the numerical value of the constant, we
 provide a proof below.
 \begin{lemma}
\label{lemma::bernstein}
  Let $Y_1 \etc Y_n$ be independent  random variables such that $\E Y_j^2=1$ and
   $\norm{Y_j}_{\psi_2} \le \a$ for all $j=1 \etc n$. Then for any
  $\theta \in (0,1)$
  \[
    \P \left( \left| \frac{1}{n} \sum_{j=1}^n Y_j^2-1 \right|
    >\theta \right)
    \le 2 \exp \left( - \frac{\theta^2 n}{10 \a^4} \right).
  \]
 \end{lemma}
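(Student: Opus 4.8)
The plan is to run the standard Chernoff (Laplace transform) argument on the centered sum $\sum_{j=1}^n (Y_j^2 - 1)$, handling the two tails separately while tracking every constant. The first step is to convert the $\psi_2$ hypothesis into polynomial moment bounds: since $\E\exp(Y_j^2/\a^2) \le 2$, expanding the exponential as a power series and comparing term by term (all terms being nonnegative) yields $\E Y_j^{2k} \le k!\,\a^{2k}$ for every $k \ge 1$. The case $k=1$ combined with $\E Y_j^2 = 1$ forces $\a \ge 1$, a fact I will need in order to check that the optimal Laplace parameter stays in the admissible range.

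For the \emph{upper} tail I would estimate, for $\lambda \ge 0$ with $\lambda\a^2 \le 1/2$, the transform $\E\exp(\lambda(Y_j^2-1)) = e^{-\lambda}\,\E\exp(\lambda Y_j^2)$. Expanding $\E\exp(\lambda Y_j^2)$ in powers of $\lambda$, isolating the linear term $\lambda\,\E Y_j^2 = \lambda$, and summing the remaining geometric-type series with the moment estimate gives $\E\exp(\lambda Y_j^2) \le 1 + \lambda + (\lambda\a^2)^2/(1-\lambda\a^2)$. Using $1+t\le e^t$ together with $1-\lambda\a^2 \ge 1/2$ then collapses this to $\E\exp(\lambda(Y_j^2-1)) \le \exp(2\lambda^2\a^4)$. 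Multiplying over the independent $Y_j$, applying Markov's inequality to $\exp(\lambda\sum_j(Y_j^2-1))$, and optimizing at $\lambda = \theta/(4\a^4)$ — which satisfies $\lambda\a^2 \le 1/2$ precisely because $\a \ge 1 > \theta/2$ — produces $\prob{\tfrac1n\sum_j(Y_j^2-1) > \theta} \le \exp(-\theta^2 n/(8\a^4))$.

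The \emph{lower} tail is softer, since $Y_j^2 \ge 0$. Here I would invoke the elementary inequality $e^{-x} \le 1 - x + x^2/2$, valid for all $x \ge 0$, with $x = \mu Y_j^2$ and $\mu \ge 0$, combined with $\E Y_j^4 \le 2\a^4$, to obtain $\E\exp(-\mu(Y_j^2-1)) \le \exp(\mu^2\a^4)$ with \emph{no} restriction on $\mu$. The corresponding Chernoff bound optimized at $\mu = \theta/(2\a^4)$ gives $\prob{\tfrac1n\sum_j(Y_j^2-1) < -\theta} \le \exp(-\theta^2 n/(4\a^4))$. Since both exponents dominate $\theta^2 n/(8\a^4) \ge \theta^2 n/(10\a^4)$, a union bound over the two tails finishes the argument.

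The routine part here is the algebra and constant-chasing; the one delicate point — the main obstacle — is the upper-tail transform, because $Y_j^2$ is only subexponential and $\E\exp(\lambda Y_j^2)$ exists only for $\lambda\a^2 < 1$. I must keep the Laplace parameter inside that window while still reaching the optimizer, which is exactly where $\a \ge 1$ enters. The comfortable slack between the $1/8$ and $1/4$ that the method produces and the stated $1/10$ confirms that the constant in the lemma is safe.
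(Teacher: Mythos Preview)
Your argument is correct and follows essentially the same route as the paper: derive moment bounds from the $\psi_2$ hypothesis, control the moment generating function of $Y_j^2-1$ as a geometric series valid for small Laplace parameter, then optimize the Chernoff bound using $\a\ge 1$ to keep the parameter in range. The only notable difference is that the paper treats the two tails symmetrically (expanding $\E\exp[\t(Y_j^2-1)]$ directly and choosing $\t = \pm\theta/(5\a^4)$), whereas you handle the lower tail separately via $e^{-x}\le 1-x+x^2/2$ on $x\ge 0$, which is a little slicker and yields a slightly better constant, but this is cosmetic.
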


For a set $J \subset \{1, \ldots, p\}$, denote
$E_J = \span\{e_j: j \in J\}$, and set $F_J=A E_J$.
For each subset $F_J \cap S^{q-1}$,  construct an $\ve$-net $\Pi_J$,
which satisfies
$$\Pi_J \subset  F_J \cap S^{q-1} \; \; \text{and } \; \; |\Pi_J| \leq (1+2/\ve)^m.$$
The existence of such $\Pi_J$ is guaranteed by Lemma~\ref{eq::Pi-cover-numbers}.
 If
$$\Pi = \bigcup_{|J| = m} \Pi_J,$$
then the previous estimate implies
$$\size{\Pi} = (3/\ve)^m {p \choose m} \leq
\left(\frac{3e p}{m \ve}\right)^m =
\exp\left(m \log \left(\frac{3e p}{m \ve}\right)\right)
$$
For $y \in S^{q-1} \cap F_J \subset F$,
let $\pi(y)$ be one of the closest point in the $\ve$-cover
$\Pi_J$.
Then
\bens
\frac{y - \pi(y)}{\twonorm{y - \pi(y)}} \in F_J \cap S^{q-1}
\; \; \text{where} \; \; \twonorm{y - \pi(y)} \leq \ve.
\eens
Denote by
$\Psi_1, \ldots, \Psi_n$ the rows of the matrix $\Psi$, and
set $\Gamma=n^{-1/2}\Psi$.  Let $x \in S^{q-1}$.
 Applying Lemma~\ref{lemma::bernstein}
to the  random variables $\ip{\Psi_1, x}^2, \ldots, \ip{\Psi_n, x}^2$,
we have that for every $\theta <1$
\ben
\prob{\size{\twonorm{\Gamma x}^2 - 1} > \theta}
& = &
\label{eq::Bernstein}
\prob{\size{\inv{n} \sum_{i=1}^n \ip{\Psi_i, x}^2 - 1} > \theta} \leq
2 \exp\left(-\frac{n \theta^2}{10 \alpha^4}\right).
\een
For
\bens
n \geq \frac{20 m \alpha^4}{\theta^2} \log \left(\frac{3e p}{m \ve}\right),
\eens
 the union bound implies
\bens
\prob{\exists x \in \Pi \text{ s. t. }
\size{\twonorm{\Gamma x}^2 - 1} > \theta}  \leq
2 \size{\Pi} \exp\left(-  \frac{n \theta^2}{10 \alpha^4}\right)
\leq
2 \exp\left(-  \frac{n \theta^2}{20 \alpha^4}\right)
\eens
Then for all $y_0 \in \Pi$
\bens
& & 1 - \theta \leq \twonorm{\Gamma y_0}^2 \leq 1 + \theta
\; \; \text{and so } \; \;  \\
& & 1 - \theta \leq \twonorm{\Gamma y_0} \leq 1 + \frac{\theta}{2}
\eens
with probability at least
$1 - 2 \exp\left(-  \frac{n \theta^2}{20 \alpha^4}\right)$,
The bound
over the entire $S^{q-1} \cap F_J$ is obtained by approximation.
We have
\ben
\label{eq::triangle-net}
\twonorm{\Gamma \pi(y)} - \twonorm{\Gamma (y- \pi(y))} \leq
\twonorm{\Gamma y} \leq
\twonorm{\Gamma \pi(y)}
+\twonorm{\Gamma (y- \pi(y))}
\een
Define
$$
\norm{\Gamma }_{2, F_J} := \sup_{y \in S^{q-1} \cap F_J}
\twonorm{\Gamma y}.$$
The RHS of~\eqref{eq::triangle-net} is upper bounded by
$1 + \frac{\theta}{2} + \ve  \norm{\Gamma }_{2, F_J}$. By
taking the supremum over all $y \in S^{q-1} \cap F_J$, we have
\bens
\norm{\Gamma}_{2, F_J} \leq  1 + \frac{\theta}{2} +
\ve  \norm{\Gamma}_{2, F_J}
\; \; \text{and hence } \; \;
\norm{\Gamma}_{2, F_J} \leq \frac{1 + \theta/2}{1-\ve}.
\eens
The LHS of~\eqref{eq::triangle-net} is lower bounded by
$1 - \theta - \ve  \norm{\Gamma}_{2, F_J}$, and hence for all
$y \in S^{q-1} \cap F_J$
\bens
\twonorm{\Gamma y}
& \geq &  1 - \theta - \ve  \norm{\Gamma}_{2, F_J}  \geq
1 - \theta -  \frac{\ve(1 + \theta/2)}{1-\ve}
\eens
Putting these together, we have for all $y \in S^{q-1} \cap F_J$
\bens
1 - \theta -  \frac{\ve(1 + \theta/2)}{1-\ve} \leq
\twonorm{\Gamma y} \leq  \frac{1 + \theta/2}{1-\ve}
\eens
which holds for all sets $J$.
Thus for $\theta < 1/2$ and $\ve = \frac{\theta}{1+ 2\theta}$,
\bens
1 - 2\theta  < \twonorm{\Gamma y} < 1 + 2 \theta.
\eens
For any $m$-sparse vector $u \in S^{p-1}$
\bens
\frac{A u}{\twonorm{A u}} \in F_J
\; \; \text{ for } J=\supp(u),\; \;
\eens
and so
\[
(1 - 2\theta )
\twonorm{A u} \leq
\twonorm{\Gamma A u } \leq (1 + 2\theta) \twonorm{A u}.
\]
Taking $\tau = \theta/2$ finishes the proof for  Theorem~\ref{thm:subgaussian-T-II}.

\subsection{Proof of Lemma~\ref{lemma::bernstein}}
\begin{proofof2}
 Note that  $\a \ge \norm{Y_1}_{\psi_2} \ge \norm{Y_1}_{2} = 1$.
  Using the elementary inequality $t^k \le k! s^k e^{t/s}$, which holds for all $t,s > 0$, we
  obtain
  \[
    |\E (Y_j^2-1)^k| \le \max (\E Y_j^{2k}, 1)
    \le \max ( k! \alpha^{2k} \cdot \E e^{Y_j^2/\a^2}, 1)
    \le 2 k! \a^{2k}
  \]
  for any $k \ge 2$. Since for any $j$ $\E Y_j^2=1$, for any $\t \in \R$
  with $|\t| \a^2 <1$
  \begin{multline*}
   \E \exp \left[ \t (Y_j^2-1) \right]
   \le 1+ \sum_{k=2} \frac{1}{k!} |\t|^k \cdot  |\E (Y_j^2-1)^k|
   \le 1+ \sum_{k=2} |\t|^k \cdot 2 \a^{2k}  \\
   \le 1+ \frac{2 \t^2 \a^4}{1-|\t| \a^2}
   \le \exp \left(\frac{2 \t^2 \a^4}{1-|\t| \a^2} \right).
  \end{multline*}
  By Markov's inequality, for $\t \in (0, \a^{-2})$
  \begin{multline*}
    \P \left(  \frac{1}{n} \sum_{j=1}^n Y_j^2-1
    >\theta \right)
    \le \E \exp \left(\t   \sum_{j=1}^n (Y_j^2-1)
    -\t \theta n \right) \\
    = e^{-\t \theta n} \cdot \left(\E \exp \left[ \t (Y^2-1) \right]
    \right)^n
    \le \exp \left( -\t \theta n + \frac{2 \t^2 \a^4 n}{1-|\t| \a^2}
    \right).
  \end{multline*}
  Set $\t=\frac{\theta}{5 \a^4}$, so $\t \a^2 \le 1/5$.
  Then the previous inequality implies
  \[
    \P \left(  \frac{1}{n} \sum_{j=1}^n Y_j^2-1
    >\theta \right)
    \le \exp \left(- \frac{\theta^2 n}{10 \a^4} \right).
  \]
  Similarly, considering $\t<0$, we obtain
  \[
    \P \left( 1- \frac{1}{n} \sum_{j=1}^n Y_j^2
    >\theta \right)
    \le \exp \left(- \frac{\theta^2 n}{10 \a^4} \right).
  \]

 \end{proofof2}

\section{RE condition for random matrices with bounded entries}
\label{sec:bounded-proof}

We next consider the case of design matrix $X$ consisting of independent identically distributed rows
 with bounded entries. As in the previous section, we reformulate
 Theorem \ref{thm::RE-bounded-entries-intro} in the form of an almost
 isometry condition.
 
\begin{theorem}  \label{thm::RE-bounded-entries}
 Let $0<\d<1$and $0< s_0<p$. Let $Y \in \R^p$ be a random vector such that
 $\norm{Y}_{\infty} \le M$ a.s., and denote $\S= \E Y Y^T$. Let $X$ be an
$n \times p$ matrix,  whose rows $X_1 \etc X_n$ are independent copies of $Y$.
Let $\Sigma$ satisfy the $\RE(s_0, 3k_0, \Sigma^{1/2})$ condition
as in Definition~\ref{def:memory}.
Set
\[
  d=d(3k_0, \Sigma^{1/2})  = s_0 +  s_0 \max_j \twonorm{\Sigma^{1/2} e_j}^2 \left(\frac{16 K^2(s_0, 3k_0, \Sigma^{1/2}) (3k_0)^2 (3k_0 + 1)}{\delta^2}\right).
\]
Assume that $d \leq p$ and $\r=\r_{\min}(d,\S^{1/2}) >0$. 
If for some absolute constant $C$
  \[
    n \ge \frac{C M^2 d   \cdot \log p}{\r \d^2} \cdot
      \log^3 \left( \frac{C M^2 d \cdot \log p}{\r \d^2 }\right),
  \]
 then with probability at least
  $1- \exp \left( -  \d \r n/(6 M^2 d) \right)$ all
  vectors $u \in \Cone(s_0, k_0)$ satisfy
  \[
   ( 1-\d) \twonorm{u} \le \frac{\norm{X u}_2}{\sqrt{n}} \le (1+\d)  \twonorm{u}.
  \]
 \end{theorem}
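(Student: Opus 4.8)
The plan is to derive Theorem~\ref{thm::RE-bounded-entries} from the reduction principle (Theorem~\ref{thm::isometry}) exactly as Theorem~\ref{thm:subgaussian-T} was derived from it, so that the only genuinely new ingredient is an almost-isometry estimate on the $d$-sparse subspaces. Set $A=\Sigma^{1/2}$ and $E=\bigcup_{|J|=d}E_J$. Since $v^T\Sigma v=\E\ip{Y,v}^2=0$ forces $\ip{Y,v}=0$ almost surely and hence $Xv=0$ almost surely, the kernel of $\Sigma^{1/2}$ is almost surely contained in that of $X$, so $X/\sqrt n$ factors as $\tilde\Psi\,\Sigma^{1/2}$ for some matrix $\tilde\Psi$. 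Applying Theorem~\ref{thm::isometry} with $\d$ replaced by $\d/5$, it then suffices to show that, with the stated probability,
\[
  \Big(1-\tfrac{\d}{5}\Big)\twonorm{\Sigma^{1/2}\upsilon}\ \le\ \frac{\norm{X\upsilon}_2}{\sqrt n}\ \le\ \Big(1+\tfrac{\d}{5}\Big)\twonorm{\Sigma^{1/2}\upsilon}
\]
holds simultaneously for every $d$-sparse vector $\upsilon$.

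I would phrase this multiplicative estimate in \emph{whitened} coordinates, which is what produces the correct dependence on $\r$. For $\upsilon$ supported on a set $J$ with $|J|=d$ one has $\norm{X\upsilon}_2^2/n=\upsilon_J^T(\tfrac1n X_J^TX_J)\upsilon_J$ and $\twonorm{\Sigma^{1/2}\upsilon}^2=\upsilon_J^T\Sigma_{JJ}\upsilon_J$, where $\Sigma_{JJ}$ is the corresponding principal submatrix. Since $\lambda_{\min}(\Sigma_{JJ})\ge\r>0$ by Definition~\ref{def::sparse-eigen}, the matrix $\Sigma_{JJ}$ is invertible, and the relative error above is bounded by $\twonorm{B_J}$ for
\[
  B_J\ :=\ \Sigma_{JJ}^{-1/2}\Big(\tfrac1n X_J^TX_J\Big)\Sigma_{JJ}^{-1/2}-I_J\ =\ \tfrac1n\sum_{i=1}^n z_iz_i^T-I_J,\qquad z_i:=\Sigma_{JJ}^{-1/2}(X_i)_J .
\]
Thus it is enough to prove $\sup_{|J|=d}\twonorm{B_J}\le c\,\d$ for a small absolute constant $c$. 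The crucial feature is that $\E z_iz_i^T=I_J$ while the whitened vectors obey $\twonorm{z_i}^2\le\twonorm{\Sigma_{JJ}^{-1/2}}^2\twonorm{(X_i)_J}^2\le M^2d/\r$, using $\norm{Y}_\infty\le M$, $|J|=d$, and $\twonorm{\Sigma_{JJ}^{-1/2}}^2=1/\lambda_{\min}(\Sigma_{JJ})\le1/\r$; this is precisely where $\r$ must enter, as anticipated in Remark~\ref{remark::lower-bound}.

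The technical heart is then a uniform bound on $\sup_{|J|=d}\twonorm{B_J}$. I would first control its expectation by symmetrizing and applying Rudelson's inequality for the expected operator norm of a Rademacher sum of rank-one matrices, conditionally on $X$: for fixed $J$ this bounds $\E_\epsilon\twonorm{\tfrac1n\sum_i\epsilon_iz_iz_i^T}$ by a factor $\sqrt{(\log d)/n}\cdot\max_i\twonorm{z_i}$ times $\twonorm{\tfrac1n\sum_iz_iz_i^T}^{1/2}$. Since $\max_i\twonorm{z_i}^2\le M^2d/\r$ and $\twonorm{\tfrac1n\sum_iz_iz_i^T}\le1+\twonorm{B_J}$, a self-bounding argument closes the recursion and yields $\E\twonorm{B_J}\lesssim\sqrt{M^2d\log d/(\r n)}$; taking the union over the $\binom{p}{d}$ choices of $J$ replaces $\log d$ by a term of order $\log p$ and, after tracking the iterated logarithmic losses, accounts for the $\log^3$ factor in the sample-size hypothesis. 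Finally I would upgrade this to a high-probability statement: because each whitened summand $z_iz_i^T$ has operator norm at most $M^2d/\r$, a Bernstein/Talagrand concentration inequality for the supremum — in the regime where this uniform bound dominates — gives that $\sup_J\twonorm{B_J}$ exceeds a constant multiple of its mean by $t\asymp\d$ with probability at most $\exp(-c\,n\d\r/(M^2d))$, matching the exponent $\d\r n/(6M^2d)$.

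The step I expect to be the main obstacle is the expectation bound on $\sup_{|J|=d}\twonorm{B_J}$. Because the coordinates of $Y$ are only bounded and not assumed independent, the sparse marginals $\ip{Y,\upsilon}$ are merely bounded by $M\sqrt d\,\twonorm{\upsilon}$ rather than subgaussian, so the $\e$-net plus Bernstein scheme used for Theorem~\ref{thm:subgaussian-T-II} is too lossy and would force a much larger $n$. What rescues the estimate is the square-root-of-the-norm structure of Rudelson's lemma together with the whitening by $\Sigma_{JJ}^{-1/2}$, which simultaneously makes the mean of the rank-one summands the identity and caps their norm at $M^2d/\r$; the delicate part is the careful accounting of the logarithmic factors across the self-bounding iteration and the union over sparse supports, which is what produces the $\log^3$ term and the $1/\r$ dependence in the final bound.
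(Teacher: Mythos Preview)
Your overall architecture matches the paper's: reduce via Theorem~\ref{thm::isometry} to an almost-isometry on $d$-sparse vectors, whiten, symmetrize, bound the expected supremum, then upgrade to high probability by Talagrand's concentration for the empirical process (which indeed gives the exponent $\d\r n/(M^2d)$, since each summand is a.s.\ bounded by $M^2d/\r$). The gap is the step ``Rudelson's inequality for fixed $J$, then take the union over the $\binom{p}{d}$ supports''. Rudelson's lemma controls $\E\twonorm{B_J}$ for each fixed $J$, not $\E\sup_J\twonorm{B_J}$, and there is no useful union bound for expectations. If instead you obtain per-block tails (matrix Bernstein gives $\P(\twonorm{B_J}>\d)\le 2d\exp(-c\,n\d^2\r/(M^2d))$) and union over the $\binom{p}{d}$ subsets, the exponent must absorb $\log\binom{p}{d}\asymp d\log(p/d)$, forcing $n\gtrsim M^2 d^{2}\log(p/d)/(\r\d^2)$---a full factor of $d$ worse than the theorem's hypothesis. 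Your sentence that the union ``replaces $\log d$ by a term of order $\log p$'' is exactly where the argument fails: the combinatorial cost is $d\log(p/d)$, not $\log p$.

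The paper circumvents this loss by bounding $\E\sup_{y\in F}\big|\tfrac1n\sum_j\e_j\ip{\Psi_j,y}^2\big|$ for $F=\bigcup_{|J|=d}\Sigma^{1/2}E_J\cap S^{p-1}$ \emph{all at once} via Dudley's entropy integral in the data-dependent norm $\norm{z}_Y=\max_j|\ip{\psi_j,z}|$, with the covering numbers of $F$ in that norm estimated by Maurey's empirical method as in~\cite{RV08}. That argument gives $N(F,\norm{\cdot}_Y,u)\le(2p)^{l}$ with $l\asymp M^2 d\log n/(\r u^2)$; the point is that Maurey's bound treats all sparse supports simultaneously and does not pay the $\binom{p}{d}$ price. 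Integrating Dudley then produces the linear-in-$d$ requirement $n\gtrsim M^2 d\log p\cdot\log^3(\cdots)/(\r\d^2)$, after which Talagrand delivers the stated probability exactly as you outlined.
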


 Similarly to Theorem~\ref{thm:subgaussian-T}, Theorem \ref{thm::RE-bounded-entries} can
 be derived from Theorem~\ref{thm::isometry},  and the corresponding bound for
 $d$-sparse vector, which is proved below.

 \begin{theorem} \label{thm::sparse-bounded-entries}
  Let $Y \in \R^p$ be a random vector such that $\norm{Y}_{\infty}
  \le M$ a.s., and denote $\S= \E Y Y^T$. Let $X$ be an $n \times p$ matrix,
  whose rows $X_1 \etc X_n$ are independent copies of $Y$.
Let $0< m \leq p$.  If $\r=\r_{\min}(m, \S^{1/2})>0$ and
  \begin{equation}  \label{bound on n}
    n \ge \frac{C M^2 m  \cdot \log p}{\r \d^2} \cdot
      \log^3 \left( \frac{C M^2 m   \cdot \log p}{\r \d^2 }\right),
  \end{equation}
  then with probability at least
  $1- 2 \exp \left( - \frac{  \e \r n}{6 M^2 m} \right)$ all $m$-sparse
  vectors $u$ satisfy
  \[
    1-\d
    \le \frac{1}{\sqrt{n}} \cdot \norm{\frac{X u}{\norm{\Sigma^{1/2} u}_2}}_2
    \le 1+\d.
  \]
 \end{theorem}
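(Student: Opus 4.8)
The plan is to prove the $m$-sparse almost-isometry bound directly; Theorem~\ref{thm::RE-bounded-entries} then follows by combining it with the reduction principle of Theorem~\ref{thm::isometry}, exactly as in the subgaussian case. Since the assertion is invariant under rescaling each $u$, it suffices to control the empirical process
\[
  Z := \sup_{u}\left| \frac{1}{n}\sum_{i=1}^n \ip{X_i,u}^2 - 1 \right|,
\]
where the supremum runs over all $m$-sparse $u$ normalized by $\twonorm{\Sigma^{1/2}u}=1$; indeed $\E\ip{X_i,u}^2 = u^T\Sigma u = 1$ for such $u$, so $Z \le 2\d-\d^2$ gives the two-sided estimate after taking square roots. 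The decisive structural observation is that for such $u$ one has $\twonorm{u}\le 1/\sqrt{\r}$, hence $\norm{u}_1\le\sqrt{m}\,\twonorm{u}\le\sqrt{m/\r}$ and therefore $|\ip{X_i,u}| \le \norm{X_i}_{\infty}\,\norm{u}_1 \le M\sqrt{m/\r} =: K$. Thus, after normalizing to the $\Sigma$-sphere, the linear functionals $u\mapsto\ip{X_i,u}$ are uniformly bounded by $K$, and the problem becomes a restricted-isometry estimate for a bounded system with effective boundedness parameter $K^2 = M^2 m/\r$, which is precisely the quantity in the sample-size hypothesis \eqref{bound on n}. The proof then splits into (a) bounding $\E Z$ and (b) showing $Z$ concentrates around $\E Z$.

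For step (a) I would first symmetrize, reducing $\E Z$ to $\tfrac{2}{n}\,\E\sup_{u}\bigl|\sum_i \varepsilon_i\ip{X_i,u}^2\bigr|$ with Rademacher signs $\varepsilon_i$, and then run a chaining (Dudley entropy integral) argument conditionally on $X_1,\dots,X_n$. The increments obey $|\ip{X_i,u}^2-\ip{X_i,u'}^2| = |\ip{X_i,u-u'}|\,|\ip{X_i,u+u'}| \le 2K\,|\ip{X_i,u-u'}|$, so the symmetrized process is subgaussian with respect to a metric dominated by $2K\,\twonorm{X(u-u')}$, and the entropy integral is governed by the covering numbers of the normalized $m$-sparse set. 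I would estimate these in two regimes: at small scales by the volumetric bound of Lemma~\ref{eq::Pi-cover-numbers} inside each coordinate subspace $E_J$ (contributing $\log N \lesssim m\log(1/t)$ together with the $\binom{p}{m}\le(ep/m)^m$ choices of support), and at large scales by a Maurey-type empirical approximation (Lemma~\ref{lemma::maurey}), which replaces a sparse vector by a convex combination of only $O(1/t^2)$ coordinate atoms and hence gives $\log N \lesssim (K^2/t^2)\log p$ with \emph{no factor of $m$}. Splitting the Dudley integral at the crossover of these two bounds produces the extra powers of $\log$, while the support count produces the $\log p$, yielding a bound of the shape $\E Z \lesssim \sqrt{K^2\log p\,\log^3(\cdot)/n}\cdot\sqrt{1+\E Z}$. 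Crucially, because we normalized so that the baseline mean is $1$ rather than $\rho_{\max}(m,\Sigma^{1/2})$, the quantity $\sup_u\tfrac1n\twonorm{Xu}^2$ reappearing in the chaining is at most $1+Z$; the resulting self-bounding inequality therefore closes against the constant $1$, \emph{without any dependence on the largest sparse eigenvalue}, and solving it forces the $\log^3$-corrected form of \eqref{bound on n} and makes $\E Z\le\d/2$.

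For step (b) I would invoke a deviation inequality for the supremum of a bounded empirical process (Talagrand/Bousquet, or a direct Bernstein argument in the spirit of Lemma~\ref{lemma::bernstein}): each summand $\ip{X_i,u}^2$ lies in $[0,K^2]$ and has weak variance at most $K^2$, so $Z$ concentrates about $\E Z$ at scale $\d$ with failure probability of the order $\exp(-c\,\d n/K^2)=\exp(-c\,\d\r n/(M^2 m))$, matching the stated tail. Combining (a) and (b) gives $Z\le\d$ with the asserted probability. The main obstacle is step (a): extracting the sharp $\sqrt{m}$ (rather than $m$) scaling requires the delicate two-scale entropy estimate, and arranging the $\Sigma$-sphere normalization so that the self-bounding recursion closes against $1$ instead of $\rho_{\max}$ is exactly the point that lets the final bound depend only on $\rho_{\min}$.
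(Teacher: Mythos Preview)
Your overall architecture---symmetrization, Dudley chaining with a two-regime (volumetric/Maurey) entropy bound, a self-bounding inequality for $\E Z$, and Talagrand's concentration for the deviation---is exactly what the paper does, and is correct.

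There is one place where your write-up does not quite close. You factor the increment as $|\ip{X_i,u}^2-\ip{X_i,u'}^2|\le 2K\,|\ip{X_i,u-u'}|$ and declare the chaining metric to be $2K\,\twonorm{X(u-u')}$, but the Maurey entropy estimate you then quote, $\log N\lesssim (K^2/t^2)\log p$, is the bound for the \emph{max} norm $\|z\|_Y:=\max_i|\ip{X_i,z}|$, not for $\twonorm{Xz}$ (for the latter, the coordinate atoms $Xe_j$ have norm up to $M\sqrt{n}$, and Maurey would give $\log N\lesssim (K^2 n/t^2)\log p$, costing you an extra factor of $K^2$ in the sample size). The paper resolves this by factoring the other way,
\[
  d(x,y)=\Bigl(\sum_i(\ip{\Psi_i,x}^2-\ip{\Psi_i,y}^2)^2\Bigr)^{1/2}
  \le 2R\cdot\max_i|\ip{\Psi_i,x-y}|,\qquad R:=\sup_{y\in F}\Bigl(\sum_i\ip{\Psi_i,y}^2\Bigr)^{1/2},
\]
pulling out the \emph{random} factor $R$ and chaining in $\|\cdot\|_Y$; Maurey then gives your $(K^2/t^2)\log p$ entropy (with an extra $\log n$), and the self-bounding you describe enters because $\E R\le\sqrt{n(1+\Delta)}$. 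With that single swap your sketch matches the paper's proof line for line.
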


 To prove Theorem \ref{thm::sparse-bounded-entries} we consider random variables $Z_u=\norm{X u}_2/(\sqrt{n} \norm{\Sigma^{1/2} u}_2)-1$, and estimate the expectation of the supremum of  $Z_u$ over the set of sparse vectors using Dudley's entropy integral. The proof of this part closely follows \cite{RV08}, so we will only sketch it. To derive the large deviation estimate from  the bound on the expectation we use Talagrand's measure concentration theorem for empirical processes, which provides a sharper estimate, than the method used in \cite{RV08}.

\begin{proof}

 For $J \subset \{1 \etc p\}$, let $E_J$ be the coordinate subspace spanned by the vectors $e_j, \ j \in J$.
 Set
 \[
   F = \bigcup_{|J|=m} \S^{1/2} E_J \cap S^{p-1}.
 \]
 Denote $\Psi=\S^{-1/2} X$ so $\E \Psi \Psi^T =\id$,
 and let $\Psi_1 \etc \Psi_n$ be independent copies of $\Psi$.
 It is enough to show that
 with probability at least $1- \exp \left( - \frac{  \e \r n}{6 M^2 m} \right)$
 for any $y \in
 F$
 \[
   \left| 1- \frac{1}{n} \sum_{j=1}^n \pr{\Psi_j}{y}^2 \right| \le
   \d.
 \]
 To this end we estimate
 \[
   \D:=\E \sup_{y \in F}\left| 1- \frac{1}{n} \sum_{j=1}^n \pr{\Psi_j}{y}^2
  \right|.
 \]

 The standard symmetrization argument implies that
 \[
  \E \sup_{y \in F}\left| 1- \frac{1}{n} \sum_{j=1}^n \pr{\Psi_j}{y}^2
  \right|
  \le \frac{2}{n} \E \sup_{y \in F} \left| \sum_{j=1}^n \e_j
        \pr{\Psi_j}{y}^2 \right|,
 \]
 where $\e_1 \etc \e_n$ are independent Bernoulli random variables
 taking values $\pm 1$ with probability $1/2$.
 The estimate of the last quantity is based on the following Lemma,
 which is similar to Lemma 3.6 \cite{RV08}.
 \begin{lemma}  \label{l: bernoulli}
  Let $F$ be as above, and let $\psi_1 \etc \psi_n \in \R^p$.
  Set
  \[
   Q=\max_{j=1 \etc n} \norm{\S^{1/2} \psi_j}_{\infty}.
  \]
  Then
\bens
    \E \sup_{y \in F} \left| \sum_{j=1}^n \e_j  \pr{\psi_j}{y}^2
    \right|
       \le
     \sqrt{\frac{C m Q^2 \cdot \log n \cdot \log p}{\r }} \cdot
      \log \left( \frac{C m Q^2}{\r }\right)
    \ \cdot \
     \sup_{y \in F} \left( \sum_{j=1}^n \pr{\psi_j}{y}^2
    \right)^{1/2}.
 \eens
 \end{lemma}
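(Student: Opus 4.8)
The plan is to condition on the fixed vectors $\psi_1\etc\psi_n$ and view $Z_y:=\sum_{j=1}^n\e_j\pr{\psi_j}{y}^2$, $y\in F$, as a Rademacher process. Its increments are subgaussian with respect to the metric $d(y,y')^2=\sum_{j=1}^n\big(\pr{\psi_j}{y}^2-\pr{\psi_j}{y'}^2\big)^2$, so Dudley's entropy bound gives $\E_\e\sup_{y\in F}|Z_y|\le C\int_0^\infty\sqrt{\log\Net(F,d,u)}\,du$. Two elementary facts about the geometry of $F$ drive the estimate. First, every $y\in F$ has the form $y=\S^{1/2}w$ with $w$ being $m$-sparse and $\twonorm{\S^{1/2}w}=1$; since $\r=\r_{\min}(m,\S^{1/2})>0$ we get $\twonorm{w}\le 1/\sqrt\r$ and $\norm{w}_1\le\sqrt{m}\,\twonorm{w}\le\sqrt{m/\r}=:\Lambda$, whence the uniform bound $\max_j|\pr{\psi_j}{y}|=\max_j|\pr{\S^{1/2}\psi_j}{w}|\le Q\norm{w}_1\le Q\Lambda=:\kappa$. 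Second, writing $\norm{z}_*:=(\sum_j\pr{\psi_j}{z}^2)^{1/2}$, the $\norm{\cdot}_*$-diameter of $F$ is at most $2R$ with $R=\sup_{y\in F}(\sum_j\pr{\psi_j}{y}^2)^{1/2}$, the very quantity appearing on the right-hand side.

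Next I would factor the increment metric using $\pr{\psi_j}{y}^2-\pr{\psi_j}{y'}^2=\pr{\psi_j}{y-y'}\pr{\psi_j}{y+y'}$, which yields $d(y,y')\le\big(\max_j|\pr{\psi_j}{y+y'}|\big)\norm{y-y'}_*\le 2R\,\norm{y-y'}_{*,\infty}$, where $\norm{z}_{*,\infty}:=\max_j|\pr{\psi_j}{z}|$. Consequently $\Net(F,d,u)\le\Net(F,\norm{\cdot}_{*,\infty},u/2R)$, and, since the $\norm{\cdot}_{*,\infty}$-diameter of $F$ is at most $2\kappa$, Dudley reduces to $\E_\e\sup_{y\in F}|Z_y|\le 2CR\int_0^{2\kappa}\sqrt{\log\Net(F,\norm{\cdot}_{*,\infty},t)}\,dt$. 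The whole problem is thus transferred to bounding covering numbers of $F$ in the $\ell_\infty$-type seminorm $\norm{\cdot}_{*,\infty}$.

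The crucial step is to estimate $\Net(F,\norm{\cdot}_{*,\infty},t)$ by Maurey's empirical method (Lemma~\ref{lemma::maurey}) realised in $\ell_\infty^n$ rather than in $\ell_2$. For $y=\S^{1/2}w\in F$ the vector to be approximated is $\big(\pr{\S^{1/2}\psi_j}{w}\big)_{j=1}^n\in\R^n$, which after scaling by $\Lambda$ and appending a zero atom is a convex combination of the signed feature vectors $\pm\big((\S^{1/2}\psi_j)_i\big)_{j=1}^n$, each of $\ell_\infty^n$-norm at most $Q$. Averaging $N$ i.i.d.\ copies and using symmetrization together with the Bernoulli maximal inequality over the $n$ coordinates produces an approximant with $\norm{\cdot}_{*,\infty}$-error $\lesssim\Lambda Q\sqrt{(\log n)/N}$, supported on at most $N$ coordinates. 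Choosing $N\asymp\Lambda^2 Q^2(\log n)/t^2$ and counting the $\le(2p)^N$ possible atom multisets gives $\log\Net(F,\norm{\cdot}_{*,\infty},t)\lesssim\frac{\Lambda^2Q^2\log n\,\log p}{t^2}=\frac{m Q^2\log n\,\log p}{\r\,t^2}$. This is the estimate that replaces the wasteful factor $n$ of a naive $\ell_2$ Maurey bound by the harmless $\log n$.

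Finally I would insert this into the reduced Dudley integral. The dominant contribution is $2CR\int_{t_0}^{2\kappa}\sqrt{\frac{mQ^2\log n\,\log p}{\r\,t^2}}\,dt=2CR\sqrt{\frac{mQ^2\log n\,\log p}{\r}}\;\log(2\kappa/t_0)$, and choosing the lower cutoff $t_0$ so that $\log(2\kappa/t_0)\lesssim\log(CmQ^2/\r)$ reproduces exactly the claimed bound after factoring out $R$; the residual range $t<t_0$ is controlled by the volumetric estimate $\log\Net(F,\norm{\cdot}_{*,\infty},t)\lesssim m\log(ep/m)+m\log(2\kappa/t)$, valid since $F$ meets only $\binom pm$ coordinate subspaces of dimension $m$ each of $\norm{\cdot}_{*,\infty}$-diameter at most $2\kappa$, and its contribution $\int_0^{t_0}\sqrt{m\log(2\kappa/t)}\,dt$ is of lower order for a suitably small $t_0$. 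The main obstacle is precisely this covering step: a direct Maurey bound in the Euclidean seminorm $\norm{\cdot}_*$ would force $\max_i\norm{\S^{1/2}e_i}_*^2\asymp nQ^2$ into the covering number and spoil the estimate by a factor of $n$. Carrying out the empirical approximation in $\norm{\cdot}_{*,\infty}$, where the atoms are bounded by $Q$ and the union bound over coordinates costs only $\sqrt{\log n}$, is what makes the bound dimension-free up to logarithms; the remaining delicacy is bookkeeping the logarithmic factors and the crossover scale $t_0$ so that they collapse into the single factor $\log(CmQ^2/\r)$.
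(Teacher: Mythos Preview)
Your approach is essentially the paper's: Dudley's inequality, the reduction $d(y,y')\le 2R\,\norm{y-y'}_Y$ with $\norm{z}_Y=\max_j|\pr{\psi_j}{z}|$ (your $\norm{\cdot}_{*,\infty}$), the Maurey-in-$\ell_\infty$ covering bound $\log N(F,\norm{\cdot}_Y,u)\lesssim mQ^2(\log n)(\log p)/(\r u^2)$ for large $u$ combined with the volumetric estimate over the $\binom{p}{m}$ spheres for small $u$, and the split of the entropy integral (the paper simply takes $t_0=1$, which already gives $\log(2\kappa)\asymp\log(CmQ^2/\r)$). One small slip to fix: your intermediate chain $(\max_j|\pr{\psi_j}{y+y'}|)\,\norm{y-y'}_*\le 2R\,\norm{y-y'}_{*,\infty}$ is not valid as written---you should factor the other way, pulling out $\max_j|\pr{\psi_j}{y-y'}|=\norm{y-y'}_{*,\infty}$ and bounding $\norm{y+y'}_*\le 2R$, exactly as the paper does.
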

 Assuming Lemma \ref{l: bernoulli}, we finish the proof of the
 Theorem. First, note that by the definition of $\Psi_j$,
 \[
   \max_{j=1 \etc n} \norm{\S^{1/2} \Psi_j}_{\infty} \le M \
   \text{a.s.}
 \]
 Hence, conditioning on $\Psi_1 \etc \Psi_n$ and applying Lemma
 \ref{l: bernoulli}, we obtain
  \[
    \D
    \le \frac{2}{n} \cdot  \sqrt{\frac{C m M^2 \cdot \log n \cdot \log p}{\r }} \cdot
      \log \left( \frac{C m M^2}{\r }\right)
    \ \cdot \E \sup_{y \in F}  \left(\sum_{j=1}^n \pr{\Psi_j}{y}^2
    \right)^{1/2},
  \]
  and by Cauchy--Schwartz inequality,
  \[
   \E \sup_{y \in F}  \left( \sum_{j=1}^n \pr{\Psi_j}{y}^2  \right)^{1/2}
      \leq  \left(  \E \sup_{y \in F}\sum_{j=1}^n \pr{\Psi_j}{y}^2 \right)^{1/2},
  \]
 so
  \[
    \D
    \le  \frac{2}{\sqrt{n}} \cdot  \sqrt{\frac{C m M^2 \cdot \log n \cdot \log p}{\r }} \cdot
      \log \left( \frac{C m M^2}{\r }\right)  \ \cdot \ \left( \D+1 \right)^{1/2}.
  \]
  If $n$ satisfies \eqref{bound on n}, then
  \[
    \D    \le  \d
    \ \cdot \ \left( \D+1
    \right)^{1/2},   \text{and thus } \; \D \le 2 \d.
  \]

  For $y \in F$ define a random variable $f(y)= \pr{\Psi}{y}^2 -1$.
  Then $|f(y)| \le \pr{X}{\S^{-1/2}y}^2 + 1 \le M^2 \r^{-1} m + 1 :=a$
  a.s., because $\S^{-1/2}y$ is an $m$-sparse vector, whose norm does not exceed
  $\r^{-1/2}$. Set
  \[
    Z=\sup_{y \in F}\sum_{j=1}^n f_j(y),
  \]
  where $f_1(y) \etc f_n(y)$ are independent copies of $f(y)$. The
  argument above shows that $\E Z \le 2 \d n$.
  Then
  Talagrand's concentration inequality for empirical processes
  \cite{L01} reads
  \[
   \P( Z \ge t ) \le \exp \left( - \frac{t }{6 a} \right)
   \le \exp \left( - \frac{t  \r}{6 M^2 m} \right)
  \]
  for all $t \ge 2 \E Z$. Setting $ t = 4 \d n$,
we have
  \[
   \P(\sup_{y \in F}\sum_{j=1}^n  \left( \pr{\Psi_j}{y}^2 -1\right) \ge 4\d n )    \le \exp \left( - \frac{4 \d n \r}{6 M^2 m} \right).
  \]
Similarly, considering random variables $g(y) = 1 - \pr{\Psi}{y}^2$,
we show that
  \[
   \P(\sup_{y \in F}\sum_{j=1}^n  \left(1- \pr{\Psi_j}{y}^2\right) \ge 4\d n )    \le \exp \left( - \frac{4 \d n \r}{6 M^2 m} \right),
  \]
 which completes the proof of the theorem.
\end{proof}

  It remains to prove Lemma \ref{l: bernoulli}. By Dudley's
  inequality
  \[
    \E \sup_{y \in F} \left| \sum_{j=1}^n \e_j  \pr{\psi_j}{y}^2
    \right|
    \le C \int_0^{\infty} \log^{1/2} N(F,d,u) \, du.
  \]
  Here $d$ is the natural metric of the related Gaussian process
  defined as
  \begin{eqnarray*}
    d(x,y)
    &=&\left[ \sum_{j=1}^n \left( \pr{\psi_j}{x}^2- \pr{\psi_j}{y}^2
                \right)^2 \right]^{1/2} \\
    &\le& \left[ \sum_{j=1}^n \left( \pr{\psi_j}{x}+ \pr{\psi_j}{y}
                \right)^2 \right]^{1/2} \cdot \max_{j=1 \etc n} |\pr{\psi_j}{x-y}|\\
    &\le& 2R \cdot \norm{x-y}_Y,
  \end{eqnarray*}
  where
  \[
   R= \sup_{y \in F} \left( \sum_{j=1}^n \pr{\psi_j}{y}^2   \right)^{1/2}, \quad
   \text{and }
   \norm{z}_Y=\max_{j=1 \etc n} |\pr{\psi_j}{z}|.
  \]
   The inclusion $\sqrt{m} B_1^p \supset   \bigcup_{|J|=m} E_J \cap S^{p-1}$ implies
 \[
   \sqrt{m} \S^{1/2} B_1^p \supset \S^{1/2} \conv ( \bigcup_{|J|=m} E_J \cap S^{p-1}) \supset \r^{1/2} F.
 \]
 Hence, for any $y \in F$
  \begin{equation}   \label{norm_Y}
    \norm{z}_Y \le \r^{-1/2} \sqrt{m} \max_{j=1 \etc n}
    \norm{\S^{1/2}\psi_j}_{\infty}= \r^{-1/2} \sqrt{m} Q.
  \end{equation}
  Replacing the metric $d$ with the norm $\norm{\cdot}_Y$, we obtain
  \[
    \E \sup_{y \in F} \left| \sum_{j=1}^n \e_j  \pr{\psi_j}{y}^2
    \right|
    \le C R \int_0^{\r^{-1/2} \sqrt{m} Q} \log^{1/2} N(F,\norm{\cdot}_Y,u) \, du.
  \]
  The upper limit of integration is greater or equal than the diameter of $F$ in the norm $\norm{\cdot}_Y$, so for $u > \r^{-1/2} \sqrt{m} Q$ the integrand is 0.
  Arguing as in Lemma 3.7 \cite{RV08}, we can show that
  \begin{equation}  \label{Maurey}
   N(F,\norm{\cdot}_Y,u) \le N(\r^{-1/2} \sqrt{m} \S^{1/2} B_1^p, \norm{\cdot}_Y,u)
   \le (2p)^l,
  \end{equation}
  where
  \[
    l=\frac{C \r^{-1}m \left( \max_{i=1 \etc p} \max_{j=1 \etc n} |\pr{\S^{1/2}
    e_i}{\psi_j}| \right)^2}{u^2} \cdot \log n
    = \frac{C m Q^2 \cdot \log n}{\r u^2}
  \]

  Also, since $F$ consists of the union
 $\binom{p}{m}$  Euclidean spheres, the inclusion \eqref{norm_Y} and  the volumetric
  estimate yield
  \begin{eqnarray} \label{volumetric}
  N(F,\norm{\cdot}_Y,u)
  &\le& \binom{p}{m} \cdot \left(1+ \frac{2\r^{-1/2} \sqrt{m} Q}{u} \right)^m
\; \le \; \left(\frac{e p}{m} \right)^m \cdot
      \left(1+ \frac{2\r^{-1/2} \sqrt{m} Q}{u} \right)^m.
  \end{eqnarray}
  Estimating the covering number of $F$ as in \eqref{Maurey} for $u
  \ge 1$, and as in \eqref{volumetric} for $0<u<1$, we obtain
  \begin{eqnarray*}
   & &\E \sup_{y \in F} \left| \sum_{j=1}^n \e_j  \pr{\psi_j}{y}^2
    \right| \\
   & \le
    &C R \int_0^1 \sqrt{m} \cdot \left( \log \left(\frac{e p}{m} \right)
        + \log \left(1+ \frac{2\r^{-1/2} \sqrt{m} Q}{u} \right) \right)^{1/2} \, du
        \\
   & &+ CR \int_1^{\r^{-1/2} \sqrt{m} Q} \sqrt{\frac{C m Q^2 \cdot \log n}{\r
    u^2}} \cdot \sqrt{\log 2 p} \, du \\
    &\le
    &CR \sqrt{\frac{ m Q^2 \cdot \log n \cdot \log p}{\r }} \cdot
      \log \left( \frac{C m Q^2}{\r }\right). \qed
  \end{eqnarray*}
  \begin{remark}
\label{remark::lower-bound}
  Note that unlike the case of a random matrix with subgaussian marginals,
  the estimate of Theorem \ref{thm::sparse-bounded-entries} contains the minimal sparse singular value $\r$. This is, however, necessary, as the following example shows.

  Let $m=2^l$, and assume that  $p=k \cdot m$, for some $k \in \N$. For $j=1 \etc k$ let $D_j$ be the $m \times m$ Walsh matrix. Let $A$ be a $p \times p$ block-diagonal matrix with blocks $D_1 \etc D_k$ on the diagonal, and let $Y \in \R^p$ be a random vector, whose values are the rows of the matrix $A$ taken with probabilities $1/p$. Then $\norm{Y}_{\infty}=1$ and $\E Y Y^T= (m/p) \cdot \id$, so $\rho=m/p$. Hence, the right-hand side of \eqref{bound on n} reduces to
  \[
      \frac{C p  \cdot \log p}{ \d^2} \cdot
      \log^3 \left( \frac{C p  \cdot \log p}{ \d^2 }\right)
  \]
  From the other side, if the matrix $X$ satisfies the conditions of Theorem \ref{thm::sparse-bounded-entries} with, say, $\d=1/2$, then all rows of the matrix $A$ should be present among the rows of the matrix $X$. An elementary calculation shows that in this case it is necessary to assume that $n \ge C p \log p$, so the estimate \eqref{bound on n} is exact up to a power of the logarithm.

 Unlike the matrix $\S$, the matrix $A$ is not symmetric. However, the example above can be easily modified by considering a $2p \times 2p$ matrix
 \[
   \tilde{A}= \left(
                \begin{matrix}
                  0 & A \\
                  A^T & 0 \\
                \end{matrix}
              \right).
 \]
 This shows that the estimate  \eqref{bound on n} is tight under the symmetry assumption as well.

  \end{remark}

\bibliography{subgaussian}

\end{document}